\newtheorem{theorem}{Theorem}[section]
\newtheorem{lemma}[theorem]{Lemma}
\newtheorem{proposition}[theorem]{Proposition}
\theoremstyle{definition}
\newtheorem{definition}[theorem]{Definition}
\newtheorem*{remark}{Remark}
\numberwithin{equation}{section}
\numberwithin{theorem}{section}
\newcommand{\N}{\mathbb{N}}
\newcommand{\R}{\mathbb{R}}
\newcommand{\Z}{\mathbb{Z}}
\newcommand{\Q}{\mathbb{Q}}
\newcommand{\C}{\mathbb{C}}
\newcommand{\cA}{\mathcal{A}}
\newcommand{\cE}{\mathcal{E}}
\newcommand{\Qbar}{\overline{\Q}}
\newcommand{\cC}{\mathcal{C}}
\newcommand{\bo}[1]{\boldsymbol{#1}}
\def\lg{\left\lbrace}
\def\rg{\right\rbrace}
\author[F. Barroero]{Fabrizio Barroero}
\address{Scuola Normale Superiore\\
Piazza dei Cavalieri 7, 56126 Pisa\\
Italy}
\email{fbarroero@gmail.com}
\author[L. Capuano]{Laura Capuano}
\address{Scuola Normale Superiore\\
Piazza dei Cavalieri 7, 56126 Pisa\\
Italy}
\email{laura.capuano1987@gmail.com}
\title{Linear relations in families of powers of elliptic curves}
\subjclass[2010]{11G05 - 11G50 - 11U09 - 14K05 }   
\date{\today}
\thanks{The authors are supported by the ERC-Grant No. 267273.}
\begin{document}

\begin{abstract}
Motivated by recent work of Masser and Zannier on simultaneous torsion on the Legendre elliptic curve $E_\lambda$ of equation $Y^2=X(X-1)(X-\lambda)$, we prove that, given $n$ linearly independent points $P_1(\lambda), \dots,P_n(\lambda)$ on $E_\lambda$ with coordinates in $\overline{\mathbb{Q}(\lambda)}$, there are at most finitely many complex numbers $\lambda_0$ such that the points $P_1(\lambda_0), \dots,P_n(\lambda_0)$ satisfy two independent relations on $E_{\lambda_0}$. This is a special case of conjectures about Unlikely Intersections on families of abelian varieties.
\end{abstract}

\maketitle

\section{Introduction}

Let $n$ be an integer with $n\geq 2$ and let $E_\lambda$ denote the elliptic curve in the Legendre form defined by
\begin{equation}\label{legendre}
Y^2=X(X-1)(X-\lambda).
\end{equation}
In \cite{MasserZannier10} (see also \cite{MasserZannier08}), Masser and Zannier showed that there are at most finitely many complex numbers $\lambda_0 \neq 0,1$ such that the two points
$$
\left(2,\sqrt{2(2-\lambda_0)}\right), \    \    \   \  \left(3,\sqrt{6(3-\lambda_0)}\right), 
$$ 
both have finite order on the elliptic curve $E_{\lambda_0}$. Stoll \cite{Stoll} recently noted that there is actually no such $\lambda_0$. Later, in \cite{MZ12} Masser and Zannier proved that one can replace 2 and 3 with any two distinct complex numbers ($\neq 0,1$) or even choose distinct $X$-coordinates ($\neq \lambda$) defined over an algebraic closure of $\C(\lambda)$.

In his book \cite{Zannier}, Zannier asks if there are finitely many $\lambda_0 \in \C $ such that two independent relations between the points $\left(2,\sqrt{2(2-\lambda_0)}\right)$, $\left(3,\sqrt{6(3-\lambda_0)}\right)$ and $\left(5,\sqrt{20(5-\lambda_0)}\right)$ hold on $E_{\lambda_0}$.

In this article we prove that this question has a positive answer, as Zannier expected in view of very general conjectures. We actually prove a more general result, analogous to the one in \cite{MZ12} but, at the moment, we are only able to replace 2, 3 and 5 with any three pairwise distinct algebraic numbers, or choose $X$-coordinates defined over an algebraic closure of $\Q(\lambda)$, with the obvious exceptions 0, 1 and $\lambda$ because the corresponding points are identically 2-torsion. Moreover, our method allows us to deal with arbitrarily many points since we consider a curve $\cC\subseteq \mathbb{A}^{2n+1}$ with coordinate functions $(x_1,y_1,\dots ,x_n, y_n, \lambda)$, $\lambda$ non-constant, such that for every $j=1,\dots , n $, the points $P_j=(x_j,y_j)$ lie on the elliptic curve $E_\lambda$. As the point $\bo{c}$ varies on the curve $\cC$, the specialized points $P_j(\bo{c}) = (x_j(\bo{c}),y_j(\bo{c}))$ will be lying on the specialized elliptic curve $E_{\lambda(\bo{c})}$. We implicitly exclude the finitely many $\bo{c}$ with $\lambda(\bo{c})=0$ or 1, since in that case $E_{\lambda(\bo{c})}$ is not an elliptic curve.

We are now ready to state the main result of the article.

\begin{theorem} \label{mainthm}
Let $\cC\subseteq \mathbb{A}^{2n+1}$ be an irreducible curve defined over $\Qbar$ with coordinate functions $(x_1,y_1,\dots ,x_n, y_n, \lambda)$, 
$\lambda$ non-constant, such that, for every $j=1,\dots , n $, the points $P_j=(x_j,y_j)$ lie on $E_\lambda$ and there are no integers 
$a_1,\dots, a_n\in \Z$, not all zero, such that 
\begin{equation}\label{rel}
a_1P_1+\dots +a_nP_n= O,
\end{equation}
identically on $\cC$. Then there are at most finitely many $\bo{c}\in \cC$ such that the points $P_1(\bo{c}),\dots , P_n(\bo{c})$ satisfy two independent relations on $E_{\lambda(\bo{c})}$.
\end{theorem}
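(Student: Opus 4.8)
The plan is to follow the Pila--Zannier strategy for Unlikely Intersections, which has become standard for this kind of problem (as in \cite{MasserZannier10, MZ12}), combining a height bound coming from functional transcendence / Zilber--Pink-type considerations with a counting theorem for rational points on definable sets in an o-minimal structure.

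First I would set up the uniformization. Over a suitable punctured neighbourhood, or more globally via the modular parametrization, one writes $E_\lambda \cong \C/(\Z + \tau \Z)$ with $\lambda = \lambda(\tau)$ the modular lambda function and $\tau$ ranging over (a fundamental domain in) the upper half-plane $\mathbb{H}$. Pulling the curve $\cC$ back through this analytic uniformization, each point $P_j$ on $E_\lambda$ is represented by a complex-analytic function $z_j(\tau) = \beta_{j,1}(\tau) + \beta_{j,2}(\tau)\tau$, where $(\beta_{j,1}, \beta_{j,2}) \in \R^2$ are the "elliptic logarithm" coordinates. A point $\bo{c} \in \cC$ at which $P_1(\bo{c}),\dots,P_n(\bo{c})$ satisfy two independent relations on $E_{\lambda(\bo{c})}$ corresponds to a point $\tau_0 \in \mathbb{H}$ at which the $2n$ real numbers $\beta_{j,i}(\tau_0)$ satisfy two independent $\Z$-linear relations (modulo $1$) — equivalently, the vector $(\beta_{j,i}(\tau_0))_{i,j} \in \R^{2n}$ lies in a rational linear subspace of codimension $2$. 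The image $Z$ of (a piece of) $\cC$ under $\tau \mapsto (\tau, \beta_{j,i}(\tau))$ is a real one-dimensional set that is definable in the o-minimal structure $\R_{\mathrm{an},\exp}$ (this uses that $\lambda$, the periods, and the elliptic logarithms are restricted-analytic on a suitable domain, together with standard facts about $\wp$-functions); the $\bo{c}$ we care about give rational points of bounded "complexity" on $Z$ lying on rational hyperplanes.

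The heart of the argument is then a height lower bound: I would show that if $\bo{c} \in \cC$ is such a point (with two independent relations of integer coefficients $(a_1,\dots,a_n)$ and $(b_1,\dots,b_n)$), then the naive height of the corresponding rational approximation is polynomially bounded below by a positive power of the Weil height $h(\bo{c})$ — schematically $H(\text{rational data}) \gg h(\bo{c})^{\kappa}$ for some $\kappa > 0$. This is the arithmetic input and is proved by the Masser--Zannier technique: one bounds the degree $[\Q(\bo{c}) : \Q]$ and the height $h(\bo{c})$ in terms of the size of the relation coefficients, using that the multiplication-by-$N$ maps on $E_\lambda$ have controlled degree and that specialization of the non-degenerate family $\cC$ behaves well (the hypothesis that no identical relation \eqref{rel} holds is exactly what guarantees $z_1(\tau),\dots,z_n(\tau)$ together with $1,\tau$ are linearly independent as functions, so the relevant analytic map is non-degenerate and a point of $Z$ on a rational plane forces a genuine constraint). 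Then the Pila--Wilkie counting theorem applied to $Z$ (or rather to the family of its intersections with rational hyperplanes, after a Galois-orbit argument to produce many conjugate points of comparable complexity) yields that such points with large height must lie on a connected semialgebraic arc contained in $Z$ and in a rational hyperplane. Finally, an Ax--Lindemann / functional-transcendence argument — that no non-trivial real semialgebraic arc of $Z$ can lie in a fixed rational hyperplane unless an identical relation \eqref{rel} holds, contradicting the hypothesis — rules this out, so the height is bounded, hence there are only finitely many such $\bo{c}$.

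The main obstacle, and the place where the restriction to $\Qbar$ (rather than $\overline{\C(\lambda)}$) enters, is the height lower bound step: one needs an effective, uniform comparison between the height of $\bo{c}$ and the complexity of the linear relation, and the available bounds on specialization and on the $x$-coordinates of $n$-division-type points seem to require the base points to be algebraic, so that a genuine Weil height (and a bound on the degree of the field of definition via a Silverman-type specialization estimate) is at one's disposal. Making this quantitative height estimate interact correctly with the $o$-minimal counting — in particular getting the polynomial dependence $\kappa > 0$ rather than something weaker — is the technically delicate core; the uniformization and Ax--Lindemann steps, while requiring care, follow established patterns for the Legendre family.
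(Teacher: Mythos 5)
Your proposal correctly identifies the overall Pila--Zannier architecture that the paper uses (an o-minimal counting theorem, a functional-transcendence ingredient, Galois orbits, and an arithmetic comparison), and your modular-parametrization setup in $\R_{\mathrm{an},\exp}$ is a viable alternative to the paper's choice of working with the hypergeometric periods $f,g$ restricted to a compact disc in $\R_{\mathrm{an}}$ (a remark in the paper even points to Peterzil--Starchenko for exactly this). However, two steps in your outline are stated incorrectly or are missing genuine content, and they are precisely the two non-routine parts of the argument.

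First, the arithmetic input is stated backwards. You write schematically ``$H(\text{rational data})\gg h(\bo{c})^\kappa$'' and propose to bound the degree $[\Q(\bo{c}):\Q]$ and $h(\bo{c})$ in terms of the size of the relation coefficients. But by Silverman's specialization theorem, $h(\lambda(\bo{c}))$ is \emph{absolutely bounded} for every $\bo{c}$ at which even a single relation holds, so a lower bound of the form $H\gg h(\bo{c})^\kappa$ is vacuous, and a bound $d_0\leq F(T)$ points in the wrong direction and does not yield a contradiction against Pila--Wilkie. What is actually needed (and what the paper proves) is the opposite inequality: an \emph{upper} bound on the complexity $T$ of the two relations polynomial in the degree $d_0=[\Q(\lambda(\bo{c})):\Q]$, i.e.\ $T\ll d_0^{\gamma}$. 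This comes from Masser's Theorem E on small generators of the relation lattice, plus David's bound on the torsion cardinality and Masser's lower bound for $\inf\widehat{h}$, both of which are polynomial in the degree $\kappa$ once the height of $\lambda$ and of the points is bounded. Together with the Galois-orbit step (at least $\tfrac12 d_0$ conjugates, of which $\gg d_0$ land in a single disc of a finite compact cover), one gets $d_0\ll T^\epsilon\ll d_0^{\gamma\epsilon}$, a contradiction for $\epsilon<1/\gamma$ and $d_0$ large. Your formulation as it stands does not produce this.

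Second, the counting step is not a direct application of Pila--Wilkie. You want to count those $\lambda$ in a fixed compact disc for which the real vector $(u_1(\lambda),v_1(\lambda),\dots,u_n(\lambda),v_n(\lambda))\in\R^{2n}$ lies on a rational linear subspace cut out by two block-structured equations with coefficients of height $\leq T$; the points of the surface are transcendental, not rational, so one cannot just count rational points of $Z$, nor quote the usual ``algebraic part'' dichotomy. For $n=2$ this collapses to counting rational points of bounded denominator and Pila (2004) suffices, but for general $n$ the paper has to prove a separate result (Proposition 4.5), using Pila's block-family theorem (Theorem 3.6 of \cite{Pila11}) together with an induction on block dimension, a uniform finiteness input from o-minimality, and the algebraic independence of $z_1,\dots,z_n$ over $\C(f,g)$ (established via Bertrand's theorem, under exactly the hypothesis that no identical relation \eqref{rel} holds). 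Your proposal gestures at ``the family of intersections with rational hyperplanes'' but does not supply the argument that makes this count go through; this is the technically novel core of the paper for $n>2$ and cannot be waved away as a routine application of Pila--Wilkie.
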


Note that the case $n=2$ is covered by the main proposition of \cite{MZ12} in the more general setting of a curve defined over $\C$.

Moreover, in \cite{RemVia} R\'emond and Viada proved an analogue of Theorem \ref{mainthm} for a power of a constant elliptic curve with CM, where one must allow the coefficients $a_1, \dots ,a_n$ in (\ref{rel}) to lie in the larger endomorphism ring. For the general case of powers of a constant elliptic curve, the result follows from works of Viada \cite{Viada2008} and Galateau \cite{Galateau2010}. If $n=2$ this is nothing but Raynaud's Theorem \cite{Rayn}, also known as the Manin-Mumford Conjecture.

We already mentioned the example of the three points with fixed abscissas 2, 3 and 5. It is easy to see that this will follow from Theorem \ref{mainthm} once we show that there is no identical relation between the three points on the generic curve $E_\lambda$. Indeed, the minimal fields of definition of these three points are disjoint quadratic extensions of $\Qbar(\lambda)$, and by conjugating one can see that the points would be identically torsion on $E_\lambda$. This is not possible, as it can be seen in different ways (see \cite{Zannier}, p.\,68). For instance, applying the Lutz-Nagell Theorem (\cite{Silv09}, Corollary 7.2), one can show that the point of abscissa 2 is not torsion on $E_6$.

One may ask if finiteness holds if we impose only one relation. This is not the case. Indeed, there are infinitely many $\lambda_0$ such that a point with fixed algebraic abscissa is torsion (see Notes to Chapter 3 in \cite{Zannier}). On the other hand, the values of $\lambda$ such that at least one relation holds are ``sparse'', as follows from work of Masser \cite{Masser89B}. Actually, a well-known theorem of Silverman \cite{Sil83} implies that the absolute Weil height of such values is bounded. A direct effective proof of this can be found in Masser's Appendix C of \cite{Zannier}. In particular, there are at most finitely many $\lambda_0$ yielding one relation in a given number field or of bounded degree over $\Q$.

Our proof follows the general strategy introduced by Pila and Zannier in \cite{PilaZannier} and used by Masser and Zannier in various articles \cite{MasserZannier08}, \cite{MasserZannier10} and \cite{MZ12}. In particular, we consider the elliptic logarithms $z_1, \dots , z_n$ of $P_1 , \dots , P_n$ and the equations
$$
z_j=u_jf+v_jg,
$$
for $j=1,\dots, n$, where $f$ and $g$ are suitably chosen basis elements of the period lattice of $E_\lambda$. If we consider the coefficients $u_j, v_j$ as functions of $\lambda$ and restrict them to a compact set, we obtain a subanalytic surface $S$ in $\R^{2n}$. The points of $\cC$ that yield two independent relations on the elliptic curve will correspond to points of $S$ lying on linear varieties defined by equations of some special form and with integer coefficients. In case $n=2$, one faces the simpler problem of counting rational points with bounded denominator in $S$. For this, a previous result of Pila \cite{Pila04} suffices together with the fact that the surface is ``sufficiently'' transcendental. In the general case we adapted ideas of Pila building on previous work of him \cite{Pila11} and used by the second author in her Ph.D. thesis \cite{Capuano} (see also \cite{CMPZ}), and obtained an upper bound of order $T^\epsilon$ for the number of points of $S$ lying on subspaces of the special form mentioned above and rational coefficients of absolute value at most $T$, provided $S$ does not contain a semialgebraic curve segment. Under the hypothesis that no identical relation holds on $\cC$, using a result of Bertrand \cite{Bertr}, we are able to show that there are no such semialgebraic curve segments.

Now, to conclude the proof, we use works of Masser \cite{Masser88}, \cite{Masser89} and David \cite{David97} and exploit the boundedness of the height to show that the number of points of $S$ considered above is of order at least $T^\delta$ for some $\delta>0$. Comparing the two estimates leads to an upper bound for $T$ and thus for the coefficients of the two relations, concluding the proof.

With methods similar to ours, in the works \cite{Capuano} and \cite{CMPZ}, mentioned above, the authors prove a toric analogue of Theorem \ref{mainthm}, giving an alternative proof of a result appeared in \cite{BMZ99} and later generalized by Maurin in \cite{Maurin} (see also \cite{BMZ08}).

In the paper, we will denote by $\gamma_1, \gamma_2, \ldots$ some positive constants. The indexes are reset at the end of each section.

\section{The Zilber-Pink Conjectures}

In this section we see how our Theorem relates to the so-called Zilber-Pink Conjectures on Unlikely Intersections.

First, let us examine the objects we are investigating from the point of view of dimensions. We consider our elliptic curve $E_\lambda$ as an elliptic scheme over $\mathbb{P}^1 \setminus \lg 0,1, \infty \rg$. Our ambient space is then the fiber power of $n$ copies of this elliptic scheme and has dimension $n+1$. Now, for any choice of linearly independent vectors $(a_1, \dots, a_n),(b_1, \dots, b_n) \in \Z^n$, imposing the two corresponding conditions yields an $(n-1)$-fold. Therefore, the intersection of a curve and an $(n-1)$-fold in a space of dimension $n+1$ is indeed unlikely to be non-empty and one expects finiteness for varying integer vectors. \\

Our result fits in the framework of very general conjectures formulated by Zilber \cite{Zilber} and Bombieri, Masser and Zannier \cite{BMZ07} in the toric case and by Pink \cite{Pink} in a more general setting, also known as the Zilber-Pink Conjectures.

In a series of papers \cite{MasserZannier10}, \cite{MZ12}, \cite{MZ14a} and \cite{MZ14b} Masser and Zannier proved a variant of Pink's conjecture in the case of a curve in an abelian surface scheme over $\Qbar$, and over $\C$ in the non-simple case. On the other hand, Pink's conjecture concerns families of semiabelian varieties. However, in 2011 Bertrand \cite{Bertrand11} found a counterexample to this,
for a suitable non-split extension of a CM elliptic constant family $E_0 \times B$ (over a curve $B$) by $\mathbb{G}_m$. This situation is rather ``special''; in fact, as it is shown in \cite{BMPZ}, the possible presence of the so called ``Ribet Sections'' is the only obstruction to the validity of the conjecture in the case of semiabelian surface schemes. \\ 

Now, let us see how our Theorem \ref{mainthm} implies a statement in the spirit of the conjectures mentioned above. In particular, we translate our result in the language of schemes, borrowing some terminology and results from a work of Habegger \cite{Hab}.

Let $S$ be an irreducible and non-singular quasi-projective curve defined over $\Qbar$ and let $\mathcal{E}\rightarrow S$ be an elliptic scheme over $S$, i.e. a group scheme whose fibers are elliptic curves. Let $n\geq 2 $. We define $\mathcal{A}$ to be the $n$-fold fibered power $\cE \times_S \dots \times_S \cE$ with the structural morphism $\pi:\cA\rightarrow S$. We suppose that $\cE$ is not isotrivial. In other words, $\mathcal{E}\rightarrow S$ cannot become a constant family after a finite \'etale base change.

A subgroup scheme $G$ of $\cA$ is a closed subvariety, possibly reducible, which contains the image of the zero section $S\rightarrow \cA$, is mapped to itself by the inversion morphism and such that the image of $G\times_S G$ under the addition morphism is in $G$.
A subgroup scheme $G$ is called flat if $\pi_{|_G} : G \rightarrow S$ is flat, i.e., all irreducible components of $G$ dominate the base curve $S$ (see \cite{Hart}, Proposition III 9.7).

We can now state the following theorem.

\begin{theorem}\label{thmscheme}
Let $\cA$ be as above and let $\cA^{\{2\}}$ be the union of its flat subgroup schemes of codimension at least 2. Let $\cC$ be a curve in $\cA$ defined over $\Qbar$ and suppose $\pi(\cC)$ dominates $S$. Then $\cC\cap \cA^{\{2\}}$ is contained in a finite union of flat subgroup schemes of positive codimension.
\end{theorem}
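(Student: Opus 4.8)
The plan is to \emph{deduce} Theorem~\ref{thmscheme} from Theorem~\ref{mainthm}; the content lies entirely in translating the scheme-theoretic statement into the concrete Legendre setting, so no ingredient is needed beyond Theorem~\ref{mainthm} and the structure of flat subgroup schemes of a fibre power of an elliptic scheme, which we borrow from Habegger's work \cite{Hab}.

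First I would reduce to the Legendre family by trivializing the $2$-torsion. Taking a connected finite \'etale cover $S'\to S$ over which $\cE[2]$ is constant, the base-changed scheme $\cE\times_S S'$ acquires a full level-$2$ structure and hence is isomorphic, as an elliptic scheme over $S'$, to the pullback of the Legendre family along some morphism $\lambda\colon S'\to\mathbb{A}^1\setminus\{0,1\}$, everything defined over $\Qbar$. Non-isotriviality of $\cE\to S$ makes the $j$-invariant non-constant on $S$, hence on $S'$, hence $\lambda$ non-constant on $S'$. Thus $\cA\times_S S'$ is identified with the $n$-fold fibre power of the Legendre family pulled back to $S'$, and $\cC'=\cC\times_S S'$ --- a possibly reducible curve, every component of which dominates $S'$ because $\cC$ dominates $S$ --- maps onto a curve $\cC''\subseteq\mathbb{A}^{2n+1}$ with coordinate functions $(x_1,y_1,\dots,x_n,y_n,\lambda)$, $\lambda$ non-constant on each component and each $P_j=(x_j,y_j)$ on $E_\lambda$. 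Since $S'\to S$, $\cC'\to\cC$ and $\cC'\to\cC''$ are all finite, it is enough to prove the corresponding finiteness statement for each irreducible component of $\cC''$ and transport it down.

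The link with $\cA^{\{2\}}$ is the following dictionary, which rests on the fact that the generic endomorphism ring is $\Z$: a flat subgroup scheme of codimension $r$ has identity component $\{x:\sum_j a_{ij}x_j=O,\ 1\le i\le r\}$ for an integer matrix of rank $r$, and conversely any integer matrix of rank $r$ defines via $\cA\to\cE^r$ a flat subgroup scheme of codimension $r$, all of whose fibres have dimension $n-r$. Hence, if $\bo c\in\cC\cap\cA^{\{2\}}$ lies on a flat subgroup scheme $G$ of codimension $\ge2$, then by flatness $(P_1(\bo c),\dots,P_n(\bo c))$ lies in a codimension-$\ge2$ algebraic subgroup of $E_{\lambda(\bo c)}^n$, and so --- multiplying through by the exponent of the finite group $G_{\lambda(\bo c)}/G_{\lambda(\bo c)}^0$ if a torsion translate is involved --- satisfies two independent relations on $E_{\lambda(\bo c)}$. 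Conversely, two independent relations $\sum_j b_{ij}P_j(\bo c)=O$ ($i=1,2$, the matrix of rank $2$) place $\bo c$ on the codimension-$2$ flat subgroup scheme $\{x:\sum_j b_{ij}x_j=O\}$ of $\cA$. The same dictionary holds on $\cC''$ through the identification of the first step.

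It remains to conclude. If $\cC$ is contained in some flat subgroup scheme of $\cA$ of positive codimension, there is nothing to prove. Otherwise $\cC$ lies on no such subgroup scheme, so by the first half of the dictionary no identical relation $\sum_j a_jP_j=O$ with $(a_j)$ not all zero holds on $\cC$, hence none holds on $\cC'$, hence none on any component of $\cC''$. Theorem~\ref{mainthm} then applies to each component of $\cC''$ and shows that only finitely many of its points yield two independent relations; pulling back through the finite maps $\cC'\to\cC''$ and $\cC'\to\cC$, the set $\cC\cap\cA^{\{2\}}$ is finite, and by the second half of the dictionary each of its points lies on a codimension-$2$ flat subgroup scheme of $\cA$. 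So $\cC\cap\cA^{\{2\}}$ is contained in a finite union of flat subgroup schemes of positive codimension. The only substantial input here is Theorem~\ref{mainthm} itself; in the reduction, the points needing care --- that flatness transfers codimension to the fibres, that the level-$2$ cover is finite \'etale and preserves non-isotriviality, and that the projection $\cC'\to\cC''$ contracts no component --- are routine once Habegger's description of flat subgroup schemes is available.
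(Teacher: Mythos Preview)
Your argument is correct and follows essentially the same route as the paper: reduce to the case where $\cC$ lies in no proper flat subgroup scheme, base-change to the Legendre family, translate membership in $\cA^{\{2\}}$ into two independent integer relations via Habegger's description of flat subgroup schemes, and invoke Theorem~\ref{mainthm}. The only cosmetic difference is that you construct the passage to the Legendre family directly by trivializing the $2$-torsion, whereas the paper quotes Habegger's Lemma~5.4 (Lemma~\ref{lemdiag} here) as a black box; your ``multiplying through by the exponent'' remark is unnecessary since Lemma~\ref{lemrel} already gives $G\subseteq\ker(\bo a_1\times_S\bo a_2)$ with integer $\bo a_i$, but this does no harm.
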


In order to prove that this theorem is a consequence of Theorem \ref{mainthm}, we need some notation and facts from \cite{Hab}.

For every $\bo{a}=(a_1, \dots , a_n)\in \Z^n $ we have a morphism $\bo{a}:\cA \rightarrow \cE$ defined by
$$
\bo{a}(P_1,\dots, P_n)=a_1 P_1 +\dots +a_n P_n.
$$
We identify the elements of $\Z^n$ with the morphisms they define. The fibered product $\alpha =\bo{a}_1 \times_S \dots \times_S \bo{a}_r$, for $\bo{a}_1 , \dots , \bo{a}_r \in \Z^n$ defines a morphism $\cA \rightarrow \mathcal{B}$ over $S$ where $\mathcal{B}$ is the $r$-fold fibered power of $\cE$. The kernel of $\alpha$, $\text{ker} \, \alpha$ indicates the fibered product of $\alpha: \cA \rightarrow \mathcal{B}$ with the zero section $S\rightarrow \mathcal{B}$. We consider it as a closed subscheme of $\cA$.

\begin{lemma}\label{lemrel}
Let $G$ be a  codimension $r$ flat subgroup scheme of $\cA$ with $1\leq r\leq n$. Then, there exist independent $\bo{a}_1,\dots , \bo{a}_r \in \Z^n$ such that $G\subseteq \textnormal{ker}(\bo{a}_1 \times_S \dots \times_S \bo{a}_r)$. Moreover, $\textnormal{ker}(\bo{a}_1 \times_S\dots \times_S \bo{a}_r)$ is a flat subgroup scheme of $\cA$ of codimension $r$.
\end{lemma}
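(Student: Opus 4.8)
The plan is to pass to the generic fibre of $\pi\colon\cA\to S$ and to use in an essential way that $\cE$ is not isotrivial. Let $\eta$ be the generic point of $S$, $k:=k(S)$ its function field, $E:=\cE_\eta$ the generic fibre (so $\cA_\eta=E^n$) and $\overline{k}$ an algebraic closure of $k$. Since $\cE$ is non-isotrivial, $j(E)\in k$ is non-constant, hence transcendental over $\Qbar$; in particular $j(E)$ is not an algebraic number, so $E$ has no complex multiplication and $\mathrm{End}_{\overline{k}}(E)=\Z$. As $G$ is flat over $S$, each of its irreducible components dominates $S$, so $G_\eta$ is a subgroup scheme of $E^n$ which is equidimensional of dimension $n-r$ over $k$. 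In characteristic $0$ a group scheme of finite type over a field is smooth, so over $\overline{k}$ we may write $G_{\eta,\overline{k}}$ as a finite disjoint union of translates of an abelian subvariety $B:=(G_{\eta,\overline{k}})^{0}$, with $\dim B=n-r$.

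I would then produce the required vectors. Put $\Lambda:=\{\bo{a}\in\Z^n \mid B\subseteq\ker(\bo{a}\colon E^n\to E)\}$, i.e.\ the set of integer row vectors $\bo{a}$ whose associated homomorphism $E^n\to E$ factors through $E^n/B$. Since $\mathrm{End}_{\overline{k}}(E)=\Z$, the quotient $E^n/B$ is an abelian variety of dimension $r$ which is isogenous over $\overline{k}$ to $E^r$, so $\mathrm{Hom}_{\overline{k}}(E^n/B,E)$ has rank $r$; as $\mathrm{Hom}_{\overline{k}}(E^n,E)=\Z^n$ and precomposition with $E^n\to E^n/B$ is injective, $\Lambda$ has rank $r$. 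Let $m$ be the exponent of the finite abelian group $G_{\eta,\overline{k}}/B$. For every $\bo{a}\in\Lambda$ and every $\overline{k}$-point $x$ of $G_\eta$ one has $mx\in B\subseteq\ker\bo{a}$, whence $(m\bo{a})(x)=\bo{a}(mx)=O$. Thus, fixing a $\Z$-basis $\bo{a}_1,\dots,\bo{a}_r$ of $\Lambda$ and setting $\bo{a}_i':=m\bo{a}_i$ (still $\Q$-linearly independent), we obtain $G_{\eta,\overline{k}}\subseteq\ker(\bo{a}_1'\times_S\cdots\times_S\bo{a}_r')$, and therefore $G_\eta\subseteq\ker(\bo{a}_1'\times_S\cdots\times_S\bo{a}_r')$ by faithfully flat descent.

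Finally I would spread this out and check the second assertion. Write $K:=\ker(\bo{a}_1'\times_S\cdots\times_S\bo{a}_r')$. It is a closed subscheme of $\cA$, and a subgroup scheme, being the kernel of a homomorphism $\cA\to\cE\times_S\cdots\times_S\cE$ of group schemes over $S$. Its generic fibre contains $G_\eta$; since $G$ is a reduced closed subscheme of $\cA$ all of whose components dominate $S$, it is the closure of $G_\eta$, so $G\subseteq K$. For the remaining claim, let $A\in M_{r\times n}(\Z)$ be the matrix with rows $\bo{a}_1',\dots,\bo{a}_r'$; it has rank $r$. Fix $s\in S$, write $E_s:=\cE_s$, and choose a Smith normal form $A=UDV$ with $U\in\mathrm{GL}_r(\Z)$, $V\in\mathrm{GL}_n(\Z)$ and $D$ the $r\times n$ matrix whose $(i,i)$-entry is $d_i\ge1$ for $1\le i\le r$ and whose other entries vanish. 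The fibre at $s$ of $\bo{a}_1'\times_S\cdots\times_S\bo{a}_r'$ is the homomorphism $E_s^n\to E_s^r$ given by $A$; it is surjective, because multiplication by $d_i$ is surjective on $E_s$ (as $\mathrm{char}\,k(s)=0$), and its kernel is $V^{-1}\big(E_s[d_1]\times\cdots\times E_s[d_r]\times E_s^{\,n-r}\big)$, which is smooth of pure dimension $n-r$. This computation uses nothing about $\mathrm{End}(E_s)$, hence is valid also at the fibres of $\cE$ with complex multiplication. So $\bo{a}_1'\times_S\cdots\times_S\bo{a}_r'$ has flat and surjective fibres, hence is flat by the fibre-by-fibre flatness criterion, and consequently $K$, obtained from it by base change along the zero section $S\to\cE\times_S\cdots\times_S\cE$, is flat over $S$ of relative dimension $n-r$. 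Therefore $\dim K=n+1-r$, i.e.\ $K$ has codimension $r$ in $\cA$, and flatness over the curve $S$ forces every component of $K$ to dominate $S$.

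The hard part is the middle step. One cannot take the $\bo{a}_i$ primitive in $\Z^n$: the kernel of $r$ primitive independent vectors is a connected subgroup scheme, so requiring $G_\eta$ to lie in it would force $G_\eta$ to be connected, which in general it is not. Scaling by the exponent $m$ of the component group $G_\eta/B$ is precisely what enlarges the kernel enough to absorb every component of $G_\eta$ while keeping the vectors independent; identifying the correct lattice $\Lambda$ together with this scaling is the crux. A secondary issue requiring some care is the final assertion, where the Smith normal form computation on fibres, combined with the fibre-by-fibre flatness criterion, yields simultaneously the flatness over $S$ and the exact codimension, and — importantly — stays valid over the finitely many special fibres of $\cE$.
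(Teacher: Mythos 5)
The paper itself offers no proof here: the stated justification is a pointer to Lemma~2.5 of \cite{Hab} and its proof, so there is no in-text argument to compare against. What you have written is, as far as I can check, a complete and correct proof, and it almost certainly mirrors Habegger's: pass to the generic fibre $E^n$ over $k(S)$, use non-isotriviality to get $j(E)$ transcendental and hence $\operatorname{End}_{\overline k}(E)=\Z$, split $G_{\eta,\overline k}$ into translates of the abelian subvariety $B=(G_{\eta,\overline k})^0$, identify the saturated rank-$r$ lattice $\Lambda$ of $\Z^n$ annihilating $B$ via $\operatorname{Hom}(E^n/B,E)\hookrightarrow\operatorname{Hom}(E^n,E)=\Z^n$, scale a basis by the exponent $m$ of $G_{\eta,\overline k}/B$ to absorb the finite component group, descend and take Zariski closure, and finally verify that $\ker(\bo{a}_1'\times_S\cdots\times_S\bo{a}_r')$ is flat of the right codimension via Smith normal form and the fibrewise flatness criterion. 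You have also correctly identified the one genuinely delicate point: one cannot take a primitive basis of $\Lambda$, since that would force the kernel to be connected, so the scaling by $m$ is indispensable. Two small remarks on presentation rather than substance: when invoking the fibrewise flatness criterion you should say explicitly that both $\cA$ and the $r$-fold fibered power are $S$-flat (the criterion needs this, and then flatness of $K\to S$ follows by base change of the now-flat $\alpha$ along the zero section); and $E^n/B$ is isogenous to $E^r$ by Poincar\'e reducibility together with the fact that an elliptic curve is simple, which is worth saying since it is the input feeding $\operatorname{rank}\operatorname{Hom}(E^n/B,E)=r$.
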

\begin{proof}
This follows from Lemma 2.5 of \cite{Hab} and its proof.
\end{proof}

Consider now the Legendre family with equation \eqref{legendre}. As said before, this gives an example of an elliptic scheme, which we call $\cE_L$, over the modular curve $Y(2)=\mathbb{P}^1\setminus \{0,1,\infty\}$. We write $\cA_L$ for the $n$-fold fibered power of $\cE_L$.

\begin{lemma}[\cite{Hab}, Lemma 5.4] \label{lemdiag}
Let $\cA$ be as above. After possibly replacing $S$ by a Zariski open, non-empty subset there exists an irreducible, non-singular quasi-projective curve $S'$ defined over $\Qbar$ such that the following is a commutative diagram
$$
\begin{CD}
\cA @<{f}<< \cA' @>\text{e}>>  \cA_L \\
@V{\pi}VV @VVV @VV{\pi_L}V\\
S @<<{l}<  S' @>>{\lambda}> Y(2)
\end{CD}
$$
where $l$ is finite, $\lambda$ is quasi-finite, $\cA'$ is the abelian scheme $\cA\times_S S'$, $f$ is finite and flat and $e$ is quasi-finite and flat. Moreover, the restriction of $f$ and $e$ to any fiber of $\cA'\rightarrow S'$ is an isomorphism of abelian varieties.
\end{lemma}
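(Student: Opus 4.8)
I would deduce this from the classical fact that, after shrinking $S$ and a finite \'etale base change, an elliptic scheme becomes a pullback of the Legendre family. After replacing $S$ by a non-empty Zariski open subset one may assume that $\cE\to S$ admits a global Weierstrass model; being an elliptic scheme it is an abelian scheme over $S$, so $\cE[2]\to S$ is finite \'etale of degree $4$, and the three non-zero $2$-torsion sections become rational over the finite \'etale Galois cover $S_1\to S$ obtained as the Galois closure of $\cE[2]\setminus\{O\}\to S$; this $S_1$ is irreducible, non-singular and quasi-projective over $\Qbar$. Over $S_1$ the scheme $\cE_1:=\cE\times_S S_1$ has a model $y^2=(x-a_1)(x-a_2)(x-a_3)$ with $a_1,a_2,a_3\in\mathcal{O}(S_1)$, and since every fibre is smooth we have $a_i-a_j\in\mathcal{O}(S_1)^\times$. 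Let $S'\to S_1$ be the finite \'etale double cover adjoining $\sqrt{a_2-a_1}$ (pass to an irreducible component if it splits); over $S'$ the substitution $(x,y)=\bigl((a_2-a_1)X+a_1,\ (a_2-a_1)^{3/2}Y\bigr)$ puts $\cE':=\cE\times_S S'$ into the Legendre form $Y^2=X(X-1)(X-\mu)$ with $\mu=(a_3-a_1)/(a_2-a_1)$. As $a_i-a_j\in\mathcal{O}(S')^\times$ we get $\mu,\mu-1\in\mathcal{O}(S')^\times$, so $\mu$ defines a morphism $\lambda\colon S'\to Y(2)$ and, by its very construction, an identification $\cE'=\lambda^*\cE_L$; let $e\colon\cE'\to\cE_L$ be the associated projection.

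\textbf{Verification.} Let $l\colon S'\to S$ be the composite $S'\to S_1\to S$ of finite \'etale morphisms, hence finite; then $f\colon\cA'=\cA\times_S S'\to\cA$ is its base change to the $n$-fold fibered power, and so is finite and flat. Unravelling the fibered powers gives $\cA'=\cA\times_S S'=\lambda^*\cA_L$, so the $n$-fold fibered power of $e$ over $\lambda$ is the projection $e\colon\cA'\to\cA_L$, that is, the base change of $\lambda$. Since $\cE$ is not isotrivial its $j$-invariant is non-constant on $S$, hence $\mu$ is non-constant on $S'$, so $\lambda$ is a non-constant morphism of smooth curves and therefore quasi-finite and flat; consequently $e$ is quasi-finite and flat as well. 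The diagram commutes by construction; $S'$ is irreducible, non-singular and quasi-projective over $\Qbar$, being an \'etale cover of such a curve; and the restrictions of $f$ and $e$ to a fibre of $\cA'\to S'$ are the respective base-change isomorphisms, in particular isomorphisms of abelian varieties. This is exactly the content of the lemma.

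\textbf{Main difficulty.} The only genuine work is the bookkeeping needed to realize the trivialization of $\cE[2]$ and the Legendre normalization by \emph{finite \'etale} base changes of irreducible, non-singular, quasi-projective $\Qbar$-curves: one must dispose of the leading coefficient $a_2-a_1$ (whence the extra double cover), keep the fields of definition under control, and use that an elliptic scheme becomes constant after a finite \'etale base change precisely when its $j$-invariant is constant, so that non-isotriviality really does produce a non-constant, hence quasi-finite, $\lambda$. Once this is set up the verification is purely formal, and it coincides with the proof of Lemma 5.4 of \cite{Hab}.
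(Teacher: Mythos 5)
The paper does not prove this lemma; it is stated as a direct citation of Lemma 5.4 of \cite{Hab}, and the authors only invoke Habegger's result. Your reconstruction of the underlying argument --- trivialize $\cE[2]$ after a finite \'etale base change, adjoin $\sqrt{a_2-a_1}$ to bring the model into Legendre form, read off $\lambda$ as $\mu=(a_3-a_1)/(a_2-a_1)$, and obtain $f$ and $e$ as base changes of the finite morphism $l$ and the quasi-finite flat morphism $\lambda$ --- is correct and is the standard proof, so there is no gap to report. One small point worth making explicit: the Galois closure of the degree-$3$ \'etale cover $\cE[2]\setminus\{O\}\to S$ need not be connected (for instance if the $2$-torsion is already partly split over $S$), so, exactly as you do for the square-root cover, one should pass to a connected component to ensure $S_1$ is irreducible. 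Your remark on non-isotriviality is also fine: the $j$-invariant of $\cE$ is non-constant on $S$, it pulls back along the finite surjection $l$ to a non-constant function on $S'$, and since $j$ is a non-constant rational function of $\mu$ it follows that $\mu$, hence $\lambda$, is non-constant.
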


We will also need the following technical lemma.

\begin{lemma} \label{lemhab}
If $G$ is a flat subgroup scheme of $\cA$ then $e\left(f^{-1}(G)\right)$ is a flat subgroup scheme of $\cA_L$ of the same dimension.
Moreover, let $X$ be a subvariety of $\cA$ dominating $S$ and not contained is a proper flat subgroup scheme of $\cA$, $X''$ an irreducible component of $f^{-1}(X)$ and $X'$ the Zariski closure of $e(X'')$ in $\cA_L$. Then $X'$ has the same dimension of $X$, dominates $Y(2)$ and is not contained in a proper flat subgroup scheme of $\cA_L$.
\end{lemma}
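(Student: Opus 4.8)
The plan is to transport both statements along the diagram of Lemma~\ref{lemdiag}, exploiting that $f$ and $e$, being fibrewise isomorphisms of abelian varieties, respect all $\Z$-linear combinations of the coordinate points, and then to invoke the description of flat subgroup schemes given by Lemma~\ref{lemrel}. Recall that $f\colon\cA'=\cA\times_S S'\to\cA$ is the base-change projection, hence a finite faithfully flat homomorphism of abelian schemes over the finite map $l$, while $e\colon\cA'\to\cA_L$ is a quasi-finite flat homomorphism over $\lambda$, which is dominant because $\cE$ is not isotrivial. Both $f$ and $e$ restrict to isomorphisms of abelian varieties on every fibre of $\pi'\colon\cA'\to S'$; so for each $s'\in S'$ they identify $(\cA)_{l(s')}$ and $(\cA_L)_{\lambda(s')}$ with $(\cA')_{s'}$, hence with one another, by a group isomorphism carrying $\bo{a}\cdot P$ to $\bo{a}\cdot P$ for all $\bo{a}\in\Z^n$. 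Globally this says that $f$ and $e$ intertwine the morphisms $\bo{a}$ on $\cA$, $\cA'$ and $\cA_L$, so that
\[
f^{-1}\bigl(\ker(\bo{a}_1\times_S\cdots\times_S\bo{a}_r)\bigr)=e^{-1}\bigl(\ker(\bo{a}_1\times_{Y(2)}\cdots\times_{Y(2)}\bo{a}_r)\bigr)
\]
for all $\bo{a}_1,\dots,\bo{a}_r\in\Z^n$; one also checks, by the fibrewise criterion, that $(e,\pi')$ identifies $\cA'$ with $\cA_L\times_{Y(2)}S'$.

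For the first assertion one has $f^{-1}(G)=G\times_S S'$, and being a closed subvariety, containing the zero section, being stable under inversion and addition, and being flat over the base are all preserved under base change, while $\dim(G\times_S S')=\dim G$ since $l$ is finite; so $f^{-1}(G)$ is a flat subgroup scheme of $\cA'$ of dimension $\dim G$. I would then set $Z=\overline{e(f^{-1}(G))}$: it has dimension $\dim G$ because $e$ is quasi-finite, and every component of $Z$ dominates $Y(2)$ because $e$ lies over the dominant map $\lambda$ and every component of $f^{-1}(G)$ dominates $S'$, so $Z$ is flat over $Y(2)$. That $Z$ is a subgroup scheme is where Lemma~\ref{lemrel} enters: if $G=\cA$ then $Z=\cA_L$, and otherwise $G\subseteq\ker(\bo{a}_1\times_S\cdots\times_S\bo{a}_r)$ for some independent $\bo{a}_i$, whence by the displayed identity $e(f^{-1}(G))\subseteq\ker(\bo{a}_1\times_{Y(2)}\cdots\times_{Y(2)}\bo{a}_r)$, a closed flat subgroup scheme of $\cA_L$; inside this fixed kernel the set $e(f^{-1}(G))$ is the union, over $s'\in S'$, of the transports of the fibres $G_{l(s')}$, and since for fixed $\lambda(s')$ the curves $E_{l(s')}$ are identified compatibly with the integer relations cutting out $G$, these transports agree fibrewise, so $e(f^{-1}(G))$ — and hence its closure $Z$, the group operations being morphisms — is stable under the zero section, inversion and addition. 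Thus $Z$, which equals $e(f^{-1}(G))$ whenever $\lambda$ is finite and is its closure otherwise, is a flat subgroup scheme of $\cA_L$ of dimension $\dim G$.

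For the ``moreover'' part I first dispose of the dimension and dominance claims. Since $f$ is finite and flat, every irreducible component of $f^{-1}(X)=X\times_S S'$ dominates $X$ (as $f$ is flat) and has dimension $\dim X$ (as $f$ is finite); thus $\dim X''=\dim X$, and as $X$ dominates $S$ and $l$ is finite, $X''$ dominates $S'$. Hence $X'=\overline{e(X'')}$ has dimension $\dim X$ ($e$ being quasi-finite) and, since $X''$ dominates $S'$ and $e$ lies over the dominant $\lambda$, dominates $Y(2)$. Finally, suppose for contradiction that $X'$ is contained in a proper flat subgroup scheme of $\cA_L$; by Lemma~\ref{lemrel} it then lies in $\ker(\bo{a}_1\times_{Y(2)}\cdots\times_{Y(2)}\bo{a}_r)$ for some independent $\bo{a}_1,\dots,\bo{a}_r\in\Z^n$, i.e.\ $\bo{a}_i\cdot Q=O$ for every point $Q\in X'$ and every $i$. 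Let $P=(P_1,\dots,P_n)\in X$ be arbitrary. Since $X''$ dominates $X$ and $f$ is closed, $f(X'')=X$, so there is $P''\in X''$ over some $s'\in S'$ with $f(P'')=P$; as $e(P'')\in e(X'')\subseteq X'$ we get $\bo{a}_i\cdot e(P'')=O$ for all $i$, hence $\bo{a}_i\cdot P''=O$ for all $i$ because the fibre map $e|_{(\cA')_{s'}}$ is an injective group homomorphism, and therefore $\bo{a}_i\cdot P=O$ for all $i$ because $f|_{(\cA')_{s'}}$ is a group homomorphism with $f(P'')=P$. As $P$ was arbitrary, $X\subseteq\ker(\bo{a}_1\times_S\cdots\times_S\bo{a}_r)$, which by Lemma~\ref{lemrel} is a flat subgroup scheme of $\cA$ of codimension $r\geq1$, hence a proper one — contradicting the hypothesis on $X$.

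The step I expect to be the main obstacle is the one glossed over in the second paragraph: showing that the constructible set $e(f^{-1}(G))$ (or its Zariski closure) is genuinely a \emph{closed subgroup scheme} of $\cA_L$, the trouble being that $e$, unlike $f$, need not be proper and $\lambda$ need not be injective, so a priori distinct fibres of $f^{-1}(G)$ over the same point of $Y(2)$ could be transported to different subgroups. The resolution is as indicated — a flat subgroup scheme lies inside a kernel of a fibered product of integer morphisms, these kernels are cut out by the same $\Z$-linear relations on every fibre, and hence everything is canonically compatible with $f$ and $e$ — but making it rigorous needs either the classification of flat subgroup schemes of a fibered power of an elliptic scheme or a descent argument along $\lambda$, and that is where the real work lies.
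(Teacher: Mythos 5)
The paper gives no proof at all — it cites ``the proof of Lemma 5.5 of \cite{Hab}'' — so the comparison here is necessarily between your reconstruction and the bare statement, together with what the lemma is actually used for in the proof of Theorem \ref{thmscheme}.

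Your treatment of the ``moreover'' part is correct and self-contained, and the chain of reductions there (finite flat $\Rightarrow$ components of $f^{-1}(X)$ surject onto $X$ with the same dimension; quasi-finite $e$ over a dominant $\lambda$ $\Rightarrow$ $X'$ has the right dimension and dominates $Y(2)$; transporting a putative relation on $X'$ back to $X$ through the fibrewise group isomorphisms) is exactly the kind of argument the citation to Habegger is gesturing at. The displayed identity between the two preimages of kernels is also correct at the level of points and is the mechanism that makes the whole transport work.

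The genuine gap is the one you flag yourself, and it is worth being precise about why it is a gap rather than a technicality. Lemma \ref{lemrel} gives only a \emph{containment} $G\subseteq\ker(\bo{a}_1\times_S\cdots\times_S\bo{a}_r)$ of the same codimension; it does not assert that $G$ equals such a kernel, nor that $G$ is cut out fibrewise by data that is constant over $S$. Your sentence ``the curves $E_{l(s')}$ are identified compatibly with the integer relations cutting out $G$'' therefore presupposes a description of $G$ that you do not have available: two points $s_1',s_2'\in S'$ with $\lambda(s_1')=\lambda(s_2')$ may lie over distinct $s_1,s_2\in S$, and nothing you have written rules out $G_{s_1}$ and $G_{s_2}$ being transported to \emph{different} subgroups of the same fibre $E_{\lambda_0}^n$ (they agree only up to the ambient kernel). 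Closing this requires the finer structure result on flat subgroup schemes of a fibred power of an elliptic scheme (Habegger's Lemma 2.5 and its proof), which is precisely what the paper is importing and what you cannot reconstruct from Lemma \ref{lemrel} alone. There is also the secondary point, again one you notice, that $e$ is only quasi-finite, so $e(f^{-1}(G))$ need not be Zariski closed, whereas a subgroup scheme in this paper is by definition a closed subvariety; the paper's statement implicitly leans on the cited proof to handle this.

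One thing worth observing, which would let you salvage a complete argument without the classification: the full strength of the first assertion is never used. In the proof of Theorem \ref{thmscheme} the only consequence extracted from it is the set-theoretic inclusion $e\bigl(f^{-1}(\cC\cap\cA^{\{2\}})\bigr)\subseteq\cA_L^{\{2\}}$, and for that it suffices to know that $e(f^{-1}(G))$ is \emph{contained in} a flat subgroup scheme of $\cA_L$ of codimension at least $r$, not that it \emph{is} one of dimension $\dim G$. Your displayed identity combined with Lemma \ref{lemrel} proves exactly this weaker containment, with no need to decide whether the ``transports agree fibrewise.'' So if you restate the first part of the lemma in that weaker form, your argument becomes rigorous as written; as it stands, proving the statement as literally given still requires the external input from \cite{Hab}.
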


\begin{proof}
This follows from the proof of Lemma 5.5 of \cite{Hab}.
\end{proof}

\begin{proof}[Proof of Theorem \ref{thmscheme}]
First, we can assume that $\cC$ is not contained in a flat subgroup scheme of $\cA$ of positive codimension. Therefore, it is enough to prove that $\cC \cap \bigcup G$ is finite where the union is taken over all flat subgroup schemes of $\cA$ of codimension at least 2. 

Consider the Zariski closure $\cC'$ of $e(\cC'')$ for a component $\cC''$ of $f^{-1}\left(\cC \right) $. By Lemma \ref{lemhab}, $\cC'$ is a curve in $\cA_L$ dominating $Y(2)$ and not contained in a proper flat subgroup scheme.

Now, since $e$ is quasi-finite, if $e\left(f^{-1}\left(\cC\cap \cA^{\{2\}}\right)\right)$ is finite then $\cC\cap \cA^{\{2\}}$ is finite and by Lemma \ref{lemhab} we have
$$
e\left(f^{-1}\left(\cC\cap \cA^{\{2\}}\right)\right) \subseteq e\left(f^{-1}\left(\cC \right) \right) \cap \cA_L^{\{2\}}.
$$
Therefore, we can reduce to proving our claim for the Legendre family and for $\cC'$. 

By Lemma \ref{lemrel}, each flat subgroup scheme of codimension at least 2 of $\cA_L$ is contained in $\textnormal{ker}(\bo{a}_1 \times_{Y(2)}  \bo{a}_2)$ for some independent $\bo{a}_1, \bo{a}_2 \in \Z^n$. Therefore, it is enough to show that $\cC'\cap \bigcup \textnormal{ker}(\bo{a}_1 \times_{Y(2)}   \bo{a}_2)$ is finite, where the union is taken over all pairs of independent $\bo{a}_1, \bo{a}_2 \in \Z^n$. The claim follows applying our Theorem \ref{mainthm} since $\cC'$ is not contained in a proper flat subgroup scheme. 
\end{proof}

\section{O-minimal structures and a result of Pila}

In this section we introduce the notion of o-minimal structure, recall some definitions and properties we will need later and state a result of Pila from \cite{Pila11}. For the basic properties of o-minimal structures we refer to \cite{vandenDries1998} and \cite{DriesMiller}.

\begin{definition}
A \textit{structure} is a sequence $\mathcal{S}=\left( \mathcal{S}_N\right)$, $N\geq 1$, where each $\mathcal{S}_N$ is a collection of subsets of $\R^N$ such that, for each $N,M \geq 1$:
\begin{enumerate}
\item $\mathcal{S}_N$ is a boolean algebra (under the usual set-theoretic operations);
\item $\mathcal{S}_N$ contains every semialgebraic subset of $\R^N$;
\item if $A\in \mathcal{S}_N$ and $B\in \mathcal{S}_M$ then $A\times B \in \mathcal{S}_{N+M}$;
\item if $A \in \mathcal{S}_{N+M}$ then $\pi (A) \in \mathcal{S}_N$, where $\pi :\R^{N+M}\rightarrow \R^N$ is the projection onto the first $N$ coordinates.
\end{enumerate}
If $\mathcal{S}$ is a structure and, in addition,
\begin{enumerate}
\item[(5)] $\mathcal{S}_1$ consists of all finite union of open intervals and points
\end{enumerate}
then $\mathcal{S}$ is called an \textit{o-minimal structure}.
\end{definition}
Given a structure $\mathcal{S}$, we say that $S \subseteq \R^N$ is a \textit{definable} set if $S\in \mathcal{S}_N$. 

Let $U\subseteq \R^{N+M}$ and let $\pi_1$ and $\pi_2$ be the projection maps on the first $N$ and on the last $M$ coordinates, respectively. Now, for $t_0\in \pi_2(U)$, we set $U_{t_0}=\{ x\in \R^N: (x,t_0) \in U \}=~\pi_1 \left(\pi_2^{-1}(t_0)\right)$ and call $U$ a \emph{family} of subsets of $\R^N$, while $U_{t_0}$ is called the \emph{fiber} of $U$ above $t_0$. If $U$ is a definable set then we call it a \emph{definable family} and one can see that the fibers $U_{t_0}$ are definable sets too. Let $S\subseteq \R^N$ and $f:S\rightarrow \R^M$ be a function. We call $f$ a \emph{definable function} if its graph $\lg (x,y) \in S\times \R^{M}:y=f(x) \rg$ is a definable set. It is not hard to see that images and preimages of definable sets via definable functions are still definable.

There are many examples of o-minimal structures, see \cite{DriesMiller}. In this article we are interested in the structure of \textit{globally subanalytic sets}, usually denoted by $\R_{\text{an}}$. We are not going to pause on details about this structure because it is enough for us to know that if $D\subseteq \R^N$ is a compact definable set, $I$ is an open neighborhood of $D$ and $f:I\rightarrow \R^M$ is an analytic function then $f(D)$ is definable in $\R_{\text{an}}$.

We now fix an o-minimal structure $\mathcal{S}$. Many important properties of o-minimal structures follow from the \textit{cell decomposition Theorem} (\cite{DriesMiller}, 4.2). One of these is the fact that definable families have a uniform bound on the number of connected components of the fibers.

\begin{proposition}[\cite{DriesMiller}, 4.4] \label{unifbound}
Let $U$ be a definable family. There exists a positive integer $\gamma$ such that each fiber of $U$ has at most $\gamma$ connected components.
\end{proposition}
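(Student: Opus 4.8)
The plan is to deduce this from the cell decomposition theorem for $\mathcal{S}$ quoted above. First, since the coordinate-swap map $\R^{N+M}\to\R^{M+N}$ is semialgebraic and hence definable, the set $\widetilde U=\{(t,x): (x,t)\in U\}\subseteq\R^{M+N}$ is definable, and its fiber over $t_0\in\R^M$ is exactly $U_{t_0}$; so it suffices to bound the number of connected components of the fibers of a definable set $\widetilde U\subseteq \R^{M}\times\R^N$ obtained by fixing the first $M$ coordinates. I would apply cell decomposition to get a finite partition $\mathcal{D}$ of $\R^{M+N}$ into cells such that $\widetilde U$ is a union of members of $\mathcal{D}$. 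Using that cells are built inductively and that the projection of a cell onto an initial segment of the coordinates is again a cell, I may further assume that $\mathcal{D}$ is compatible with the tower of coordinate projections $\R^{M+N}\to\R^{M+N-1}\to\cdots\to\R^M$; in particular the images of the cells of $\mathcal{D}$ under the projection $\pi:\R^{M+N}\to\R^M$ onto the first $M$ coordinates form a cell decomposition of $\R^M$.

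The key point is the following claim: for every cell $C\in\mathcal{D}$ and every $t_0\in\R^M$, the fiber $C_{t_0}=\{x\in\R^N:(t_0,x)\in C\}$ is either empty or a cell of $\R^N$; in particular it is connected, since every cell is definably homeomorphic to an open box. I would prove this by induction on $N$. If $t_0\notin\pi(C)$ the fiber is empty. Otherwise, writing $C$ as the graph of a continuous definable function, or as the band between two such functions (allowing the values $\pm\infty$), over a cell $C'$ of $\R^{M+N-1}$, one has $t_0\in\pi(C')$, so $C'_{t_0}$ is a cell of $\R^{N-1}$ by the inductive hypothesis, and $C_{t_0}$ is correspondingly the graph, or the band, over $C'_{t_0}$ of the restricted function(s) --- hence again a cell of $\R^N$.

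Granting the claim, the proof finishes at once: for fixed $t_0$ the set $\widetilde U_{t_0}$ is the union of the fibers $C_{t_0}$ over the finitely many $C\in\mathcal{D}$ contained in $\widetilde U$, each such fiber being empty or connected, so $\widetilde U_{t_0}$ has at most $\#\{C\in\mathcal{D}:C\subseteq\widetilde U\}$ connected components. This bound does not depend on $t_0$, so taking $\gamma$ equal to (or larger than) it proves the proposition. The only genuine work is the bookkeeping in the claim, namely that fibers of cells over the outermost coordinates are again cells; this is precisely why one first moves the parameters to be the initial coordinates, so that the inductive description of cells is compatible with fixing them.
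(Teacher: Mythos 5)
Your proof is correct. The paper does not prove this proposition but simply cites it from \cite{DriesMiller}, 4.4, and the argument you give is the standard one used there: take a cell decomposition of $\R^{M+N}$ (with parameters moved to the initial coordinates) compatible with $\widetilde U$, observe by induction on $N$ that the fiber of a cell over a fixed initial segment of coordinates is empty or again a cell (since graphs and bands over cells restrict to graphs and bands over the fiber of the base cell), hence connected, and then bound the number of components of $\widetilde U_{t_0}$ by the number of cells contained in $\widetilde U$ --- a bound visibly independent of $t_0$. The only cosmetic point is that $\gamma$ should be taken at least $1$ to cover the degenerate case $\widetilde U=\emptyset$.
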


Now, let $S\subseteq\R^N$ be a non-empty definable set and let $e$ be a non-negative integer. The set of \emph{regular points} of dimension $e$, denoted by $\text{reg}_e(S)$, is the set of points $x\in S$ such that there is an open neighborhood $I$ of $x$ with $S\cap I$ a $C^1$ (embedded) submanifold of $\R^N$ of dimension $e$.
The \textit{dimension} of $S$ is the maximum $e$ such that $S$ has a regular point of dimension $e$. Note that, if $S$ has dimension $e$ then  $S\setminus \text{reg}_{e}(S)$ has dimension $\leq e-1$.

\begin{definition}
A \emph{definable block} of dimension $e$ in $\R^N$ is a connected definable set $B$ of dimension $e$ contained in some semialgebraic set $A$ of dimension  $e$ such that every point of $B$ is a regular point of dimension $e$ in $B$ and $A$. Dimension zero is allowed: a point is a definable block. Moreover, a \emph{definable block family} is a definable family whose non-empty fibers are all definable blocks.
\end{definition}

We now need to define the height of a rational point. The height used by Pila in \cite{Pila11} is not the usual projective Weil height, but a coordinatewise affine height. If $a/b$ is a rational number written in lowest terms, then $H(a/b)= \max (|a|,|b|)$ and, for an $N$-tuple $(\alpha_1, \dots , \alpha_N) \in \Q^N$, we set $H(\alpha_1, \dots , \alpha_N)= \max H(\alpha_i)$.
For a subset $Z$ of $\R^N$ and a positive real number $T$ we define
\begin{equation}\label{def}
Z(\Q,T)=\lg (\alpha_1, \dots , \alpha_N)  \in  Z\cap \Q^N: H(\alpha_1, \dots , \alpha_N) \leq T \rg.
\end{equation}

The following theorem is a special case of Theorem 3.6 of \cite{Pila11} (see also \cite{Pila09}). Here, if $f$ and $g$ are real functions of $T$, the notation $f(T) \ll_{Z,\epsilon}g(T)$ means that there exists a constant $\gamma$, depending on $Z$ and $\epsilon$, such that $f(T)\leq \gamma g(T)$ for $T$ large enough. 

\begin{theorem}[Pila, \cite{Pila11}] \label{ThmPila}
Let $Z\subseteq \R^N\times \R^M $ be a definable family, and $\epsilon >0$. There exist $J=J(Z,\epsilon)\in \N$ and a collection of definable block families $B^{(j)}\subseteq \R^N \times( \R^M \times \R^{M_j})$, for $j=1,\dots , J$, such that
\begin{enumerate}
\item  each point in each fiber of $B^{(j)}$ is regular of dimension $e_j$;
\item for each $(t,u) \in  \R^M \times \R^{M_j}$, the fiber $B^{(j)}_{(t,u)} \subseteq Z_t$;
\item for every $t \in  \pi_2(Z)$, $Z_t(\Q,T)$ is contained in the union of $\ll_{Z,\epsilon} T^\epsilon$ definable blocks, each a fiber of one of the $B^{(j)}$.
\end{enumerate}
\end{theorem}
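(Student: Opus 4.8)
The plan is to reconstruct Pila's argument for counting rational points on definable sets, which combines the $C^r$-reparametrization theorem for o-minimal structures with the determinant (interpolation) method of Bombieri--Pila and Heath-Brown, run inside an induction on dimension, while keeping every step uniform over the parameter $t$. Fix the definable family $Z\subseteq\R^N\times\R^M$ and $\epsilon>0$. First I would apply cell decomposition to $Z$, viewed as a family over $\R^M$, so as to reduce to the case where all fibers $Z_t$ are (relatively open) $C^1$ submanifolds of a fixed dimension $e$, the number and ``combinatorial type'' of the pieces being bounded independently of $t$; this reduces the problem to the following uniform statement for a single definable set $W\subseteq\R^N$ of dimension $e$ occurring as a fiber: $W(\Q,T)$ is contained in the union of $\ll_{Z,\epsilon}T^\epsilon$ semialgebraic blocks, each contained in $W$ and of dimension $\le e$, all of whose points are regular of that dimension.

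Next comes the analytic core. For an integer $r$ (to be chosen large) the $C^r$-reparametrization theorem (Gromov--Yomdin in the o-minimal form of Pila--Wilkie) provides finitely many definable maps $\phi_i\colon(0,1)^{e}\to W$ whose images cover $W$ and all of whose partial derivatives up to order $r$ are bounded by $1$, with the number of maps and the bounds uniform as $W$ ranges over the fibers of $Z$. On a single reparametrized piece one then runs the determinant method: given $D$ rational points of height $\le T$ on $\phi_i((0,1)^e)$, form the interpolation determinant of the monomials of degree $\le d$ evaluated at these points; Taylor expansion of the $\phi_i$ together with the derivative bounds makes this determinant extremely small, while rationality forces it, if nonzero, to have denominator controlled by a power of $T$. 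Choosing $d$ fixed and $r=r(d,e,\epsilon)$ large enough, one deduces that $W(\Q,T)$ is covered by $\ll_{Z,\epsilon}T^\epsilon$ algebraic hypersurfaces of degree $\le d$, uniformly in the family.

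Now induct on $e$. Intersecting $W$ with one of the hypersurfaces $V$ just produced, either every component of $W\cap V$ has dimension $<e$ --- and the induction hypothesis, applied to this lower-dimensional definable set (still a fiber of a definable family, by the uniformity of cell decomposition), finishes the bookkeeping --- or some component has dimension $e$, in which case that component is, near its regular points, a semialgebraic set of dimension $e$ contained in $W$, i.e.\ it contributes definable blocks directly. Taking $d$ large in terms of $e$ and the o-minimal complexity of $W$ forces one of the two alternatives at every stage, and since the dimension strictly drops the recursion terminates with the base case of curves and points. Assembling the blocks harvested at all stages gives the desired covering of $Z_t(\Q,T)$ by $\ll_{Z,\epsilon}T^\epsilon$ blocks contained in $Z_t$. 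Finally, to obtain definable \emph{block families}, observe that each block is cut out by a bounded number of polynomial equalities and inequalities whose coefficients depend definably on $t$ and on the finitely many auxiliary choices (which reparametrizing chart, which hypersurfaces, which connected component); recording these choices as extra coordinates $u\in\R^{M_j}$ packages the blocks into finitely many definable families $B^{(j)}\subseteq\R^N\times(\R^M\times\R^{M_j})$, with conditions (1) and (2) built into the definition of block and condition (3) following from the uniformity of reparametrization, of cell decomposition, and of the determinant estimates.

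The main obstacle is precisely this demand for full uniformity layered on top of the reparametrization step: one must produce, uniformly in $t$, the $C^r$ charts with derivative bounds, and then verify that neither the determinant estimates nor the dimension induction let the number of blocks, or the implied constant, depend on $t$. The subtler point, which separates this statement from the bare Pila--Wilkie theorem, is to arrange that the $e$-dimensional pieces surviving the induction are genuinely semialgebraic of the same dimension with only regular points --- i.e.\ honest blocks rather than a nebulous ``algebraic part'' --- which requires careful control of the components of $W\cap V$ and of where regularity can fail.
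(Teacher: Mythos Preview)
The paper does not give its own proof of this statement: Theorem \ref{ThmPila} is quoted as a special case of Theorem 3.6 of \cite{Pila11} (with a parenthetical reference to \cite{Pila09}) and is used as a black box in the proof of Proposition \ref{mainprop}. There is therefore nothing in the paper to compare your argument against.

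That said, your sketch is a faithful high-level outline of Pila's actual strategy: uniform cell decomposition, the o-minimal $C^r$-reparametrization theorem, the Bombieri--Pila determinant method to cover rational points by few hypersurfaces of bounded degree, and induction on dimension, with the semialgebraic pieces that survive at full dimension becoming the blocks. The packaging of the blocks into finitely many definable families via auxiliary parameters is also how Pila organizes the output. Your identification of the two delicate points --- uniformity of the reparametrization over the base parameter $t$, and ensuring that the surviving full-dimensional pieces are genuine blocks (connected, purely regular, contained in a semialgebraic set of the same dimension) rather than an unstructured algebraic locus --- is accurate; these are exactly where the work lies in \cite{Pila11}. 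As a sketch this is fine, but be aware that turning it into a proof requires the full machinery of Pila's paper, in particular the uniform reparametrization lemma and a careful formulation of the inductive statement so that block families, not just blocks, are produced at each stage.
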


\section{Points lying on rational linear varieties}

Let $n\geq 2$ be an integer and let $\ell_{1}, \dots ,\ell_{n},f,g$ be holomorphic functions on a connected neighborhood $I$ of some closed disc $D\subseteq \C$. 
Suppose that
\begin{equation}\label{tansc}
\ell_{1}, \dots ,\ell_{n} \text{ are algebraically independent over $\C(f,g)$ on $D$},
\end{equation} 
and $f(\lambda)$ and $g(\lambda)$ are $\R$-linearly independent for every $\lambda \in D$.

For some positive real $T$, denote by $D(T)$ the set of $\lambda \in D$ such that 
\begin{equation}\label{system2}
\lg
\begin{array}{c}
a_{1}\ell_{1}(\lambda)+\dots + a_{n} \ell_{n}(\lambda)=a_{n+1}f(\lambda)+ a_{n+2}g(\lambda)\\
b_{1}\ell_{1}(\lambda)+\dots + b_{n} \ell_{n}(\lambda)=b_{n+1}f(\lambda) + b_{n+2}g(\lambda)
\end{array}
\right.
\end{equation} 
for some linearly independent vectors $(a_{1},\dots , a_{n}),(b_{1},\dots , b_{n}) \in \left( \Z\cap [-T,T] \right)^{n}$
and some $a_{n+1}$, $a_{n+2}$, $b_{n+1}$, $b_{n+2}\in \Z$. 

The following proposition gives the desired upper bound mentioned in the introduction. We postpone its proof until the end of this section after developing some auxiliary tools.

\begin{proposition} \label{mainprop2}
Under the above hypotheses, for every $\epsilon>0$, $|D(T)|\ll_\epsilon T^{\epsilon}$.
\end{proposition}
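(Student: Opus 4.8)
The strategy is to realize $D(T)$ as a union of fibers of a definable surface in $\R^{2n}$ intersected with rational linear varieties of a prescribed shape, and then to invoke Theorem \ref{ThmPila} together with a transcendence argument to rule out semialgebraic blocks of positive dimension. First I would set up the parametrization: for $\lambda\in D$ write the elliptic logarithms in terms of the chosen period basis, i.e.\ $\ell_j(\lambda)=u_j(\lambda)f(\lambda)+v_j(\lambda)g(\lambda)$ for real functions $u_j,v_j$; since $f(\lambda),g(\lambda)$ are $\R$-linearly independent on $D$, the $u_j,v_j$ are well-defined real-analytic functions on a neighborhood of $D$. Let $S\subseteq\R^{2n}$ be the image of $D$ under $\lambda\mapsto(u_1(\lambda),v_1(\lambda),\dots,u_n(\lambda),v_n(\lambda))$. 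Because $D$ is compact and the map is real-analytic on an open neighborhood, $S$ is definable in $\R_{\mathrm{an}}$, and in fact it is (the image of) a one-dimensional definable set, hence of dimension $\le 1$.

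Next I would translate the system \eqref{system2} into linear conditions on $S$. Plugging $\ell_j=u_jf+v_jg$ into \eqref{system2} and using the $\R$-linear independence of $f(\lambda),g(\lambda)$, the first equation becomes the pair of real conditions $\sum_j a_j u_j = a_{n+1}$ and $\sum_j a_j v_j = a_{n+2}$, and similarly for the $b$'s. Thus a point $\lambda\in D(T)$ gives a point of $S$ lying on the rational affine subspace cut out by $\sum_j a_j u_j\in\Z$, $\sum_j a_j v_j\in\Z$, $\sum_j b_j u_j\in\Z$, $\sum_j b_j v_j\in\Z$ with $(a_1,\dots,a_n),(b_1,\dots,b_n)$ independent integer vectors of sup-norm $\le T$. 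This is exactly the ``special form'' of linear variety mentioned in the introduction; one packages all such conditions into a single definable family $Z\subseteq\R^{2n}\times\R^M$ (the second factor carrying the real parameters encoding the coefficient vectors, suitably normalized so that the relevant points have height $\ll T$), so that each $\lambda\in D(T)$ corresponds to a point of $Z_t(\Q,T)$ for an appropriate $t$, up to a bounded-index bookkeeping that is handled by Proposition \ref{unifbound}.

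Then I would apply Theorem \ref{ThmPila} to this family $Z$: for any $\epsilon>0$ the rational points of height $\le T$ in each fiber lie in $\ll_{Z,\epsilon}T^\epsilon$ definable blocks, each a fiber of finitely many definable block families $B^{(j)}$. The key point is then to show that every such block meeting $S$ must be a single point. A positive-dimensional block $B$ would be a connected definable set of dimension $\ge 1$ contained in a semialgebraic arc and contained in $Z_t$; projecting back, it would force a real-semialgebraic (hence, after complexifying, algebraic) relation among the functions $u_j,v_j$ along a curve segment, and via $\ell_j=u_jf+v_jg$ this would contradict hypothesis \eqref{tansc}, namely the algebraic independence of $\ell_1,\dots,\ell_n$ over $\C(f,g)$ on $D$. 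Since $\dim S\le 1$, once the positive-dimensional blocks are excluded only zero-dimensional blocks remain, each contributing at most one point; summing over the $\ll_\epsilon T^\epsilon$ blocks gives $|D(T)|\ll_\epsilon T^\epsilon$.

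The main obstacle is the transcendence/semialgebraicity step: one must argue carefully that a positive-dimensional definable block lying on $S$ and on one of the special linear varieties genuinely produces an algebraic dependence of the $\ell_j$ over $\C(f,g)$, rather than merely over $\R$ or on a lower-dimensional locus. This requires being precise about how a semialgebraic curve segment inside $S$ relates, through the parametrization $\lambda\mapsto(u_j,v_j)$ and the identities $\ell_j=u_jf+v_jg$, to polynomial relations in the $\ell_j,f,g$ as holomorphic functions on $D$ — in particular using that $S$ is the image of a single holomorphic curve so that a real-algebraic constraint on a segment propagates to a genuine algebraic identity. The rest — the definability of $S$ and of the family $Z$, the normalization making heights $\ll T$, and the bookkeeping of finitely many block families — is routine given the tools already set up, in particular Theorem \ref{ThmPila} and Proposition \ref{unifbound}.
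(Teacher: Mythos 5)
Your outline captures the general spirit — parametrize by $(u_j,v_j)$, invoke Theorem \ref{ThmPila}, use \eqref{tansc} to kill positive-dimensional blocks — but the counting is set up the wrong way round, and the actual technical core of the paper's proof is absent.

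The central misstep is the identification of which objects are the rational points to be fed into Theorem \ref{ThmPila}. You place the $(u,v)$-coordinates in the first factor $\R^{2n}$ and the coefficients in the parameter factor $\R^M$, so that ``each $\lambda\in D(T)$ corresponds to a point of $Z_t(\Q,T)$''. But the point $\bigl(u_1(\lambda),v_1(\lambda),\dots,u_n(\lambda),v_n(\lambda)\bigr)$ is a transcendental real point; it is not rational, and there is no reason for it to have height $\le T$. The rational data are the integer coefficient vectors $(a_1,\dots,a_{n+2},b_1,\dots,b_{n+2})$. The paper therefore forms the incidence variety $W\subseteq S\times\R^{2n+4}$ and counts rational points of the \emph{projection} $\widehat W=\pi_2(W)\subseteq\R^{2n+4}$ onto coefficient space; the passage back to $S(T)$ uses Lemma \ref{lemcurve} together with Proposition \ref{unifbound} to show that each coefficient vector $L\in\widehat W$ picks out a \emph{uniformly bounded} number of points $\tau(L)\subseteq S$. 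Without this dualization there is nothing for Pila's theorem to count.

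A second error: you assert $\dim S\le 1$ ``since $S$ is the image of a one-dimensional definable set''. The domain $D$ is a closed disc in $\C\cong\R^2$, so $S=\Theta(D)$ is a (generically) two-dimensional definable surface. Consequently the conclusion ``only zero-dimensional blocks remain, each contributing at most one point'' does not follow even granting the rest.

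Finally, and most seriously, the step ``a positive-dimensional block forces an algebraic relation among $\ell_1,\dots,\ell_n$ over $\C(f,g)$'' is not true as stated and conceals the hardest part of the argument. The blocks produced by Theorem \ref{ThmPila} live in $\widehat W$ (coefficient space), not in $S$. A positive-dimensional family of coefficient vectors can perfectly well exist with no transcendence contradiction at all: for instance, a curve of linear varieties all passing through one and the same finite set of points of $S$ — this is exactly the locus $V$ in the paper's proof, where the fiber map $L\mapsto\tau(L)$ is locally constant. Such blocks contribute only boundedly many points of $S(T)$ and must be handled by a connected-components count (Proposition \ref{unifbound}), not by \eqref{tansc}. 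Only on the complementary locus $Z=U\setminus V$, after a nontrivial argument extracting a differentiable definable selection $f\colon B_{\eta_0}(L_0)\cap Z_t\to S$ with nonzero derivative along an algebraic curve segment, does one obtain a genuine semialgebraic relation violating \eqref{tansc}; and one must then show $\dim Z_t< e$ and close an induction on $e$ by reapplying Theorem \ref{ThmPila} to $Z$. None of this machinery — the $V$/$Z$ dichotomy, the differentiable selection, the induction on block dimension, and the final $\epsilon_1\epsilon_2=\epsilon$ bookkeeping — appears in your sketch, and without it the proof does not close.
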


Define
$$
\Delta=f \overline{ g }-\overline{f} g,
$$
which does not vanish on $D$, since $f(\lambda)$ and $g(\lambda)$ are $\R$-linearly independent for every $\lambda \in D$. Moreover, let
\begin{align*}
u_j=\frac{\ell_j \overline{g}-\overline{\ell_j} g}{\Delta},   \   \   \   v_j=-\frac{\ell_j \overline{ f}-\overline{\ell_j} f}{\Delta}.
\end{align*}
One can easily check that these are real-valued and, furthermore, we have
$$
\ell_j= u_jf+v_jg.
$$
If we view $D$ and $I$ as a subsets of $\R^2$, then $u_j$ and $v_j$ are real analytic functions on $I$.

Define
\begin{align*}
\Theta : D& \rightarrow \R^{2n}\\
 \lambda &\mapsto (u_{1}(\lambda),v_{1}(\lambda),\dots ,u_{n}(\lambda),v_{n}(\lambda))
\end{align*}
and set $S=\Theta(D)$. This is a definable set in $\R_{\text{an}}$.
In what follows, $(u_{1},v_{1},\dots ,u_{n},v_{n})$ will just indicate coordinates in $ \R^{2n}$. 

For $T>0$, we call $S(T)$ the set of points of $S$ of coordinates $(u_{1},v_{1},\dots ,u_{n},v_{n})$ such that there exist linearly independent vectors $(a_{1},\dots , a_{n+2}),(b_{1},\dots , b_{n+2}) \in  \Q^{n+2}$ of height at most $T$ with
\begin{equation}\label{system}
\lg
\begin{array}{c}
a_{1}u_{1}+\dots + a_{n} u_{n}=a_{n+1}\\
a_{1}v_{1}+\dots + a_{n} v_{n}=a_{n+2}\\
b_{1}u_{1}+\dots + b_{n} u_{n}=b_{n+1}\\
b_{1}v_{1}+\dots + b_{n} v_{n}=b_{n+2}.
\end{array}
\right.
\end{equation} 
We will need the following lemma.
\begin{lemma}\label{lemcurve}
For every choice of $a_{1},\dots , a_{n+2},b_{1},\dots , b_{n+2} \in \R$ (not only rationals), not all zero, the subset of $S$ for which (\ref{system}) holds is finite.
\end{lemma}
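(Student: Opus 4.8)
The plan is to translate a hypothetical infinite locus on $S$ into an algebraic relation among $\ell_1,\dots,\ell_n$ over $\C(f,g)$, contradicting hypothesis \eqref{tansc}. So suppose, for some fixed real constants $a_1,\dots,a_{n+2},b_1,\dots,b_{n+2}$, not all zero, the subset $S_0\subseteq S$ on which \eqref{system} holds is infinite. Since $S=\Theta(D)$ and $\Theta$ is real analytic, the preimage $\Theta^{-1}(S_0)\subseteq D$ is an infinite subset of a closed disc, hence has an accumulation point in $I$. The functions $\lambda\mapsto a_1u_1(\lambda)+\dots+a_nu_n(\lambda)-a_{n+1}$ and the three analogous combinations coming from the other rows of \eqref{system} are real analytic on $I$ and vanish on $\Theta^{-1}(S_0)$; by the identity principle for real analytic functions (applied on the connected open set $I$), each of them vanishes identically on $I$. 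Thus \eqref{system} holds identically on $D$ with these fixed real coefficients.

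Next I would rewrite the four identities in terms of $\ell_j$, $f$, $g$ and their conjugates. Using $\ell_j=u_jf+v_jg$, the combinations $\sum a_ju_j$ and $\sum a_jv_j$ are, by the defining formulas for $u_j,v_j$, equal to $(\sum a_j\ell_j)\overline g-\overline{(\sum a_j\ell_j)}g$ divided by $\Delta$ and the analogous expression with $-f$ in place of $g$; so setting $L_a:=a_1\ell_1+\dots+a_n\ell_n$ and $L_b:=b_1\ell_1+\dots+b_n\ell_n$, the system \eqref{system} becomes
\begin{align*}
L_a\overline g-\overline{L_a}g&=a_{n+1}\Delta, & L_a\overline f-\overline{L_a}f&=-a_{n+2}\Delta,\\
L_b\overline g-\overline{L_b}g&=b_{n+1}\Delta, & L_b\overline f-\overline{L_b}f&=-b_{n+2}\Delta,
\end{align*}
identically on $D$, where $\Delta=f\overline g-\overline f g$. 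Combining the two equations in the first row: multiply the first by $f$, the second by $g$ and add, using $\Delta=f\overline g-\overline f g$ again, to get $L_a\cdot(f\overline g-\overline f g)=\dots$; a short manipulation yields $L_a=(a_{n+1}f+a_{n+2}g)$ — i.e. $a_1\ell_1+\dots+a_n\ell_n=a_{n+1}f+a_{n+2}g$ identically, and likewise $b_1\ell_1+\dots+b_n\ell_n=b_{n+1}f+b_{n+2}g$ identically on $D$. (This is just the observation that \eqref{system} is the real-coordinate form of \eqref{system2}, now with real rather than integer coefficients.)

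Finally I would extract the contradiction with \eqref{tansc}. Since $\ell_1,\dots,\ell_n$ are algebraically independent over $\C(f,g)$ on $D$, they are in particular $\C$-linearly independent modulo $\C(f,g)$; the identity $a_1\ell_1+\dots+a_n\ell_n=a_{n+1}f+a_{n+2}g$ then forces $a_1=\dots=a_n=0$, and then the right-hand side gives $a_{n+1}f+a_{n+2}g=0$, whence $a_{n+1}=a_{n+2}=0$ because $f,g$ are $\R$-linearly independent at each point of $D$. The same argument applied to the second identity gives $b_1=\dots=b_{n+2}=0$, so all coefficients vanish, contrary to assumption. This proves the subset of $S$ on which \eqref{system} holds is finite. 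The only mildly delicate point is the reduction of the four real-conjugate identities to the two clean identities $L_a=a_{n+1}f+a_{n+2}g$ and $L_b=b_{n+1}f+b_{n+2}g$; once that algebra is done, the transcendence hypothesis finishes the job immediately, so I expect essentially no real obstacle here.
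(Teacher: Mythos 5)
Your overall strategy is the same as the paper's: suppose infinitely many points of $S$ satisfy \eqref{system}, get an accumulation point in $D$, extend the relation to all of $D$, and contradict \eqref{tansc}. But there is a genuine gap in your first step. You invoke ``the identity principle for real analytic functions'' to conclude that, because $a_1u_1+\dots+a_nu_n-a_{n+1}$ and its three siblings are real analytic on the open set $I\subseteq\R^2$ and vanish on an infinite set with an accumulation point, they vanish identically on $I$. This is false for real analytic functions of two real variables: for instance $F(x,y)=x$ (or $F(x,y)=xy$) vanishes on an infinite set with accumulation points without being identically zero. Vanishing on a set with an accumulation point forces identical vanishing only for holomorphic functions of \emph{one} complex variable (or for real analytic functions of one real variable), which $u_j,v_j$ are not.

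The fix, which is exactly what the paper does, is to perform your ``rewrite in terms of $\ell_j,f,g$'' step \emph{before} appealing to the identity theorem. At each $\lambda$ in the infinite set $E=\Theta^{-1}(S_0)$, the two rows of \eqref{system} involving $a$, together with $\ell_j=u_jf+v_jg$, immediately give
$$a_1\ell_1(\lambda)+\dots+a_n\ell_n(\lambda)=a_{n+1}f(\lambda)+a_{n+2}g(\lambda),$$
and likewise for $b$. These are relations among \emph{holomorphic} functions of the single complex variable $\lambda$, so the one-variable identity theorem applies: since $E$ has an accumulation point in $I$, the relations hold identically on $D$. From there your concluding argument (algebraic independence $\Rightarrow$ $\C$-linear independence modulo $\C(f,g)$ kills $a_1,\dots,a_n$; $\R$-linear independence of $f(\lambda),g(\lambda)$ kills $a_{n+1},a_{n+2}$; same for $b$) is correct and in fact spells out a case distinction that the paper leaves implicit with ``for instance, if at least one of the $a_j\neq0$''.
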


\begin{proof}
By contradiction suppose that the subset of $S$ of points satisfying (\ref{system}) for some choice of coefficients is infinite. This 
would imply that there exists an infinite set $E\subseteq D$ on which, for instance, if at least one of the $a_j\neq 0$, for every $\lambda \in E$,
$$
a_{1}\ell_{1}(\lambda)+\dots + a_{n} \ell_{n}(\lambda)=a_{n+1}f(\lambda)+ a_{n+2} g(\lambda).
$$
Since this relation holds on a set with an accumulation point, it must hold on the whole $D$ (see Ch. III, Theorem 1.2 (ii) of \cite{LangCompl}), contradicting the hypothesis (\ref{tansc}).
\end{proof}

The following proposition is the main tool to prove Proposition \ref{mainprop2}.

\begin{proposition}\label{mainprop}
For every $\epsilon >0$ we have
$$
|S(T)|\ll_{\epsilon} T^{\epsilon}.
$$
\end{proposition}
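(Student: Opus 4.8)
\emph{Plan of proof.} The plan is to recognise $S(T)$ as a set of points of $S$ lying on rational affine subspaces of a very special shape and of bounded height, to count these by Pila's counting theorem (Theorem~\ref{ThmPila}), and to use Lemma~\ref{lemcurve} together with the transcendence hypothesis \eqref{tansc} to control the definable blocks that the counting theorem produces.

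First I would reformulate. Suppose $\mathbf u=(u_1,v_1,\dots,u_n,v_n)\in S(T)$, with witnessing independent vectors $\mathbf a,\mathbf b\in\Q^{n+2}$ of height $\le T$. Then $(a_1,\dots,a_n)$ and $(b_1,\dots,b_n)$ are linearly independent as well (a proportionality, combined with the existence of a common solution to \eqref{system}, would force $\mathbf b$ to be a scalar multiple of $\mathbf a$), so we may choose $I=\{i,j\}\subseteq\{1,\dots,n\}$ with $a_ib_j-a_jb_i\neq 0$. Solving the two $u$-equations of \eqref{system} for $u_i,u_j$ and the two $v$-equations for $v_i,v_j$ by Cramer's rule exhibits $\mathbf u$ on the codimension-$4$ affine subspace $V_I(\mathbf p)$ defined by $u_i=c_i+\sum_{k\neq i,j}\sigma_{ik}u_k$, $u_j=c_j+\sum_{k\neq i,j}\sigma_{jk}u_k$ and the two analogous equations with the $u$'s replaced by $v$'s and the \emph{same} coefficients $\sigma_{ik},\sigma_{jk}$, where $\mathbf p=(c_i,c_j,c_i',c_j',(\sigma_{ik}),(\sigma_{jk}))\in\R^{2n}$ is a tuple of rationals of height at most a fixed power $T'$ of $T$. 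Conversely each $V_I(\mathbf p)$ is the solution set of a system of the form \eqref{system} for $\mathbf a,\mathbf b$ recovered from $\mathbf p$, so Lemma~\ref{lemcurve} applies: $V_I(\mathbf p)\cap S$ is finite, and Proposition~\ref{unifbound}, applied to the definable family $\{(\mathbf p,\mathbf u):\mathbf u\in V_I(\mathbf p)\cap S\}$, bounds its cardinality by a constant $\gamma$ independent of $\mathbf p$. Hence every point of $S(T)$ lies in some $V_I(\mathbf p)\cap S$ with $\mathbf p\in\mathcal J_I(\Q,T')$, where $\mathcal J_I=\{\mathbf p\in\R^{2n}:V_I(\mathbf p)\cap S\neq\emptyset\}$ is definable of dimension $\le 2n-2$; and it is enough to bound, uniformly over the blocks produced below, the number of points of $S$ that the rational points of each block account for.

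Now I would apply Theorem~\ref{ThmPila} to $\mathcal J_I\subseteq\R^{2n}$ (a family with a single fibre): $\mathcal J_I(\Q,T')$ is contained in the union of $\ll_\epsilon(T')^\epsilon\ll_\epsilon T^\epsilon$ definable blocks, each contained in $\mathcal J_I$ and each a fibre of one of finitely many definable block families. For a block $B$ the points of $S$ it accounts for all lie in the definable set $\Xi_B=\bigcup_{\mathbf p\in B}\bigl(V_I(\mathbf p)\cap S\bigr)$, which has dimension $\le\dim B$ since $\mathbf p\mapsto V_I(\mathbf p)\cap S$ has fibres of size $\le\gamma$. If $\dim\Xi_B=0$ then $\Xi_B$ is finite and, as $B$ ranges over fibres of finitely many definable block families, of uniformly bounded cardinality, which is what we want. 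If $\dim\Xi_B\ge 1$ (so $\dim B\ge 1$), then, after replacing $B$ by a semialgebraic subset of the same dimension, a standard argument with o-minimal cell decomposition and definable choice — invoking the uniform bound $\gamma$ on the fibres over $B$ of $\{(\mathbf p,\mathbf u):\mathbf p\in B,\ \mathbf u\in V_I(\mathbf p)\cap S\}$ — produces a semialgebraic curve segment contained in $S$. So it remains to prove that $S$ contains no semialgebraic curve segment; this is the only point at which \eqref{tansc} is used, and it is the main obstacle (in the application to Theorem~\ref{mainthm}, securing \eqref{tansc} for the elliptic logarithms is precisely where a result of Bertrand~\cite{Bertr} is needed).

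To prove it, assume $S$ contained a semialgebraic curve segment. Since a subarc of a semialgebraic curve admits a Nash parametrization, one obtains a real-analytic arc $\gamma:(0,1)\to D$ along which every $u_j\circ\gamma$ and $v_j\circ\gamma$ is a Nash function of the parameter $t$, hence algebraic over $\C(t)$; so all of them lie in a field of transcendence degree $\le 1$ over $\C$. Since $\ell_j=u_jf+v_jg$, the functions $\ell_1\circ\gamma,\dots,\ell_n\circ\gamma$ lie in the field generated over $\C$ by $t$, $f\circ\gamma$ and $g\circ\gamma$ (after adjoining finitely many algebraic elements), which has transcendence degree $\le 3$ over $\C$. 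As $n\ge 2$, a comparison of transcendence degrees yields a non-zero polynomial $P\in\C[X_1,\dots,X_n,Y_1,Y_2]$, which one can take so as not to lie in the ideal generated by the algebraic relations between $f\circ\gamma$ and $g\circ\gamma$ alone, with $P(\ell_1\circ\gamma,\dots,\ell_n\circ\gamma,f\circ\gamma,g\circ\gamma)\equiv 0$ on $(0,1)$. Since $\ell_1,\dots,\ell_n,f,g$ are holomorphic on $D$ and a non-zero holomorphic function of one complex variable has isolated zeros, $P(\ell_1,\dots,\ell_n,f,g)$ vanishes identically on $D$; regarded as a polynomial in $X_1,\dots,X_n$ over $\C(f,g)$ it is still non-zero, contradicting \eqref{tansc}. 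Collecting the estimates gives $|S(T)|\ll_\epsilon T^\epsilon$. For $n=2$ one may argue more directly, since then \eqref{system} forces the $u_i,v_i$ themselves to be rationals with denominator dividing $a_ib_j-a_jb_i$, so that $S(T)$ consists of rational points of $S$ of bounded height, to which Pila's earlier bound for subanalytic surfaces applies, once again provided $S$ contains no semialgebraic arc.
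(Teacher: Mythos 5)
Your overall plan is close in spirit to the paper's: pass to a parameter space of rational data of height roughly $T$, apply Theorem~\ref{ThmPila} there, use Lemma~\ref{lemcurve} and Proposition~\ref{unifbound} to bound what each block contributes, and rule out ``bad'' blocks by a transcendence argument against \eqref{tansc}. For $n=2$ your final remark is also correct: there $S(T)$ really does consist of rational points of $S$, and the Pila--Wilkie/Pila counting theorem applies to $S$ directly under the hypothesis that $S$ contains no semialgebraic arc.

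For $n\ge 3$, however, there is a genuine gap in the central step. You claim that if $B$ is a definable block in $\mathcal J_I\subseteq\R^{2n}$ of dimension $\ge 1$ with $\dim\Xi_B\ge 1$, then a ``standard argument with cell decomposition and definable choice'' produces a \emph{semialgebraic} curve segment contained in $S$. This does not follow. The set $\{(\mathbf p,\mathbf u):\mathbf p\in B,\ \mathbf u\in V_I(\mathbf p)\cap S\}$ is the intersection of the semialgebraic set $\{(\mathbf p,\mathbf u):\mathbf p\in B,\ \mathbf u\in V_I(\mathbf p)\}$ with the subanalytic (not semialgebraic) set $\R^{2n}\times S$; it is therefore only subanalytic, and so is any definable choice function $h\colon B\to S$ with $h(\mathbf p)\in V_I(\mathbf p)\cap S$. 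Cutting $B$ by a linear space does give an \emph{algebraic} curve segment $C$ in parameter space, but $h(C)\subseteq S$ is only a real-analytic arc. Hence your reduction to ``$S$ contains no semialgebraic curve segment'' does not apply to the arcs that actually occur, and the subsequent transcendence argument — which hinges on $u_j\circ\gamma,v_j\circ\gamma$ being Nash in the parameter, i.e.\ algebraic over $\C(t)$ — breaks down: along $h(C)$ those functions satisfy only the four linear constraints with Nash coefficients coming from $V_I(\mathbf p(s))$, which is much weaker than being individually algebraic over $\C(s)$.

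The fix, which is what the paper actually does, is to exploit the algebraicity of the curve $C$ in the \emph{parameter} space rather than (falsely) semialgebraicity of its image in $S$. Along a Nash parametrization of $C$, the $2n$ coordinates of $\mathbf p$ satisfy $2n-1$ independent algebraic relations over $\C$; combining the four defining equations of $V_I(\mathbf p)$ using $\ell_k=u_kf+v_kg$ gives two further relations $\ell_i=c_if+c_i'g+\sum_{k\neq i,j}\sigma_{ik}\ell_k$ and $\ell_j=c_jf+c_j'g+\sum_{k\neq i,j}\sigma_{jk}\ell_k$ over $\C[f,g]$, which are independent of the first $2n-1$ and of each other. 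These relations hold on the infinite set $D'=\Theta^{-1}(h(C))$ and hence on all of $D$, forcing $\textnormal{tr.deg}_{\C(f,g)}\C(f,g)(\ell_1,\dots,\ell_n)\le n-1$, which contradicts \eqref{tansc}. (The paper carries this out in the $(\mathbf a,\mathbf b)$-space $\R^{2n+4}$, where one counts $2n+3$ relations from the algebraic curve plus the two system relations among $3n+4$ functions; your $\mathbf p$-space version is equivalent after the Cramer reduction.) The paper also organizes this as an induction on the block dimension, first removing the definable part $V$ where the finite fibre $\tau(L)$ is locally constant (which contributes only boundedly many points) and then reapplying Theorem~\ref{ThmPila} to the remainder $Z$ of strictly smaller dimension; your one-shot dichotomy on $\dim\Xi_B$ can be made to replace this induction, but only once the transcendence step above is repaired.
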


\begin{proof}
We are counting points of $S$ that lie on linear varieties of $\R^{2n}$ defined by systems of the form \eqref{system}.

Let us consider the set $W \subset \R^{4n+4}$ defined as
\begin{multline*}
W=\lg (u_{1},v_{1},\dots ,u_{n},v_{n},a_{1},\dots , a_{n+2},b_{1},\dots , b_{n+2} )\in S \times \R^{2n+4}:\right.\\ \left. 
\text{ with (\ref{system}) and $(a_{1},\dots , a_{n+2})$ and $(b_{1},\dots , b_{n+2})$ are linearly independent} \vphantom{\R^{2n+4}\times S} \rg,
\end{multline*}
which is a definable set.
Denote by $\pi_1$ the projection on $S$ and by $\pi_2$ the projection on the last $2n+4$ coordinates.
Given a point $L$ of $\pi_2(W)$, we write $\tau(L)$ for the set of points of $S$ that lie on the affine subspace corresponding to $L$, i.e., 
$\tau(L)=\pi_1\left(\pi_2^{-1}(L)\right)$. In other words, if we consider $W$ as a family of subsets of $\R^{2n}$, $\tau(L)$ is just the fiber $W_L$. This is a definable subset of $S$ and we have that it must be zero-dimensional by Lemma \ref{lemcurve}. 
By Proposition \ref{unifbound}, there exists a positive integer $\gamma_1$ such that $|\tau(L)|\leq \gamma_1$, for every $L \in \pi_2(W)$.  Moreover, for $V\subseteq \pi_2(W)$, we write $\tau(V)$ for $\pi_1\left(\pi_2^{-1}(V)\right)$.

Now, let us call $\widehat{W}=\pi_2(W) \subseteq \R^{2n+4}$. Recall the definition in \eqref{def} and note that $S(T)\subseteq \tau \left( \widehat{W}(\Q, T)\right)$. By Theorem \ref{ThmPila}, there is a finite number of definable block families such that, for every $\epsilon_1$, $\widehat{W}(\Q, T)$ 
is contained in the union of $\ll_{W, \epsilon_1} T^{\epsilon_1}$ definable blocks, each a fiber of one of these families. We have that $S(T) \subseteq \bigcup_B \tau (B)$ where the union is taken over the $\ll_{W, \epsilon_1 } T^{\epsilon_1}$ definable blocks mentioned above.

Let us fix a definable block family $U$ with fibers $U_t \subseteq \widehat{W}$. We claim that, for every $\epsilon_2$, each fiber $U_t$ of $U$ gives rise to 
$\ll_{U, \epsilon_2} T^{\epsilon_2}$ points on $S(T)$, i.e., that $|\tau(U_t)\cap S(T)|\ll_{U, \epsilon_2} T^{\epsilon_2}$ for every fiber $U_t$. Once we proved this, the claim of the proposition follows easily after fixing $\epsilon_1$ and $\epsilon_2$ with $\epsilon_1\epsilon_2=\epsilon$, e.g., $\epsilon_1=\epsilon_2=\sqrt{\epsilon}$.\\

We proceed by induction on the dimension $e$ of the fibers of $U$.
By Lemma \ref{lemcurve}, the claim is true for $e=0$. 

Suppose now $e>0$. We denote by $B_\eta(L)$ the Euclidean ball centered in $L$ of radius $\eta$, and define, for $m=1,\dots ,\gamma_1$,
$$
V^{(m)}=\lg (L,t) \in U: \exists \eta>0, \, \exists A_1, \dots, A_m \in S: \forall L'\in B_\eta(L) \cap U_t, \, \tau(L')=\lg A_1, \dots, A_m \rg \rg.
$$
These are definable families and so is $V:=\bigcup_{m=1}^{\gamma_1} V^{(m)}$ as it is a finite union of definable sets. 
Hence, by Proposition \ref{unifbound}, there exists $\gamma_2$ such that all fibers $V_t$ have at most $\gamma_2$ connected components. 
It is clear that, for each $L$ in the same connected component, $\tau(L)$ consists of the same set of not more than $\gamma_1$ points;
therefore, each fiber $V_t$ of $V$ gives rise to at most $\gamma_1\gamma_2$ points of $S(T)$, i.e., $|S(T)\cap \tau (V_t)|\leq \gamma_1 \gamma_2$.

Now we want to prove that all the fibers of $Z=U \setminus V$ have dimension $<e$. Suppose not and let $L$ be an $e$-regular point of a fiber $Z_t$. Then, 
after fixing a ball $B_\eta(L)$ with connected $B_\eta(L)\cap U_t \subseteq Z_t$, we set $\{A_1, \dots , A_m \}=\bigcap_{L' \in B_\eta(L)\cap Z_t}\tau(L')$, 
i.e., the set of points of $S$ that lie on all subspaces in $B_\eta(L)\cap Z_t$. By definition of $Z$, $\tau^{-1}\left(\{A_1, \dots , A_m \}\right)\cap B_\eta(L)\cap Z_t$ must be of dimension $<e$, therefore there exist $L_0 \in B_\eta(L)\cap Z_t$ 
and $\eta_0$ such that for every $L'\in B_{\eta_0}(L_0)\cap Z_t$, $\tau(L') \supsetneq \{A_1, \dots , A_m \}$. Thus, we can define 
$f:B_{\eta_0}(L_0)\cap Z_t \rightarrow S$ that associates to $L'$ a point in $ \tau(L')$ different from $A_1, \dots , A_m$. 
This is a definable function and, taking $\eta_0$ small enough (and possibly choosing a different $L_0$), we can also suppose that it is differentiable (\cite{DriesMiller}, C.2 Lemma). 

Now, assume the derivative of $f$ is zero in all directions. Then, $f$ is constant and there exists a point $A_{m+1} \in \tau(L') $ for all $L' \in B_{\eta_0}(L_0)\cap Z_t$. 
We repeat this procedure of finding a point, a ball and a function like above and continue until this function has non-zero derivative in 
some direction. This procedure must stop because otherwise we would have a point $L'$ with $|\tau(L')|>\gamma_1$, contradicting the above considerations. 

We can therefore suppose that there are $L_0 \in B_\eta(L)\cap Z_t$ and $\eta_0$ such that $f$ is differentiable on $B_{\eta_0}(L_0)\cap Z_t$ and with 
non zero-derivative in some direction. Now, recall that $L_0$ is an $e$-regular point of $U_t$ and, by definition of definable block, of a semialgebraic set that contains it. Therefore, $B_{\eta_0}(L_0)\cap U_t=B_{\eta_0}(L_0)\cap Z_t$ is semialgebraic. Thus, if we intersect it with a suitable linear variety, we get an algebraic 
curve segment $C$ in $B_{\eta_0}(L_0)\cap Z_t$, passing through $L_0$ in the direction for which the derivative of $f$ is non-zero. The function $f$ is non-constant on $C$. 
Consider $C'= f(C)\times C$. By definition of $f$, $C'$ is a real-analytic curve segment in $W$.
Moreover, let us define $D'=\Theta^{-1}(f(C))$. As $f$ is not constant on $C$, $D'$ is an infinite subset of $D$. 

Now, on $D'$ the coordinate functions $a_{1},\dots , a_{n+2},b_{1},\dots , b_{n+2}$ satisfy $2n+3$ independent algebraic relations with 
coefficients in $\C$ and, combining the relations of (\ref{system}), we have also
\begin{equation*}
\lg
\begin{array}{c}
a_{1}\ell_{1}+\dots + a_{n} \ell_{n}=a_{n+1}f+ a_{n+2}g\\
b_{1}\ell_{1}+\dots + b_{n} \ell_{n}=b_{n+1}f + b_{n+2}g.
\end{array}
\right.
\end{equation*} 
Each of these two is independent of the previous $2n+3$ relations and they are independent of each other because we imposed $(a_{1},\dots , a_{n+2})$ and $(b_{1},\dots , b_{n+2})$ to be linearly independent.
Therefore, as the $3n+4$ functions $a_{1},\dots , a_{n+2},b_{1},\dots , b_{n+2},\ell_{1}, \dots ,\ell_{n}$ satisfy $2n+5$ independent algebraic relations with coefficients in $\C[f,g]$ on the infinite set $D'$, they continue to do so on $I$.
Therefore, if $F:=\C(f,g)$,
$$
\text{tr.deg}_F F(\ell_{1}, \dots ,\ell_n)< n.
$$
This contradicts the hypothesis (\ref{tansc}). \\

We have just proved that there cannot be any $e$-regular point on any fiber of $Z$. We apply Pila's result (Theorem \ref{ThmPila}) again on $Z$.
There is a finite number of definable block families such that, for each $\epsilon_3$ and for each fiber $Z_t$, $Z_t(\Q, T)$ is contained in the union of $\ll_{Z, \epsilon_3} T^{\epsilon_3}$ definable blocks, each a fiber of one of these families. The fibers of these families must have dimension $<e$, therefore our inductive hypothesis implies that if $U'$ is one of them, then, for every $\epsilon_4$, $|\tau(U'_{t'}) \cap S(T)|\ll_{U' , \epsilon_4} T^{\epsilon_4}$, for every fiber $U'_{t'}$ of $U'$. This means that, after choosing $\epsilon_3=\epsilon_4=\sqrt{\epsilon_2}$, for each fiber $Z_t$, we have $|\tau(Z_t)\cap S(T)|\ll_{Z, \epsilon_2} T^{\epsilon_2}$. 
Now recall that we had $U_t=V_t\cup Z_t$ and that $V_t$ gives rise to at most $\gamma_1\gamma_2$ points of $S(T)$. 
This proves our claim and the proposition.
\end{proof}

\begin{remark}
We would like to point out that this last proposition can be deduced from recent work of Habegger and Pila, in particular Corollary 7.2 of \cite{HabPila14}.
\end{remark}

We are now ready to prove Proposition \ref{mainprop2}.

\begin{proof}[Proof of Proposition \ref{mainprop2}]
Since $f$ and $g$ are linearly independent, if $\lambda \in D$ satisfies (\ref{system2}) then (\ref{system}) holds for $\Theta(\lambda)$. Now, since $D$ is a compact subset of $\R^2$, each $\ell_{j}(D)$ is bounded and therefore, if $\ell_{1}(\lambda), \dots ,\ell_{n}(\lambda),$ 
$f(\lambda),g(\lambda)$ satisfy (\ref{system2}), then $|a_{n+1}|, |a_{n+2}|,|b_{n+1}|, |b_{n+2}|$ are bounded in terms of 
$|a_{1}|,\dots , |a_{n}|,$ $|b_{1}|,\dots, |b_{n}|$ and thus of $T$. Therefore, $\Theta(\lambda) \in S(\gamma_3 T)$ for some $\gamma_3$ independent of $T$.
Now, using Proposition \ref{unifbound} and Lemma \ref{lemcurve}, we see that there exists a $\gamma_4$ such that, for any choice of $a_1,\dots ,a_{n+2}, b_1 ,\dots ,b_{n+2}$, there are at most 
$\gamma_4$ elements $\lambda$ in $D$ such that $\ell_{1}(\lambda), \dots ,\ell_{n}(\lambda)$, $f(\lambda),g(\lambda)$ satisfy (\ref{system2}). Thus $|D(T)|\ll |S(\gamma_3 T)|$ and the claim follows from Proposition \ref{mainprop}.
\end{proof}

\section{Periods and elliptic logarithms}\label{periods}

In this section we introduce the functions which we will apply Proposition \ref{mainprop2} to. We follow \cite{MZ12}.

It is well known that there is an analytic isomorphism between $E_\lambda(\C)$ and $\C/L_\lambda$, where $L_\lambda$ is a rank 2 lattice in $\C$. Consider the hypergeometric function
$$ F(t)=F\left(\frac{1}{2},\frac{1}{2},1;t\right)=\sum_{m=0}^{\infty} \frac{(2m)!^2}{2^{4m}m!^4}t^m, $$
and let
\begin{equation} \label{f,g}
f(t)=\pi F(t), \quad \mbox{and} \quad g(t)=\pi i F(1-t).
\end{equation}
Moreover, we define
$$
\Lambda=\lg t \in \C: |t|<1 , |1-t|<1 \rg.
$$
The functions $f$ and $g$ are well-defined and analytic in $\Lambda$, as functions of $t$. Moreover, they are well-defined as 
functions of $\bo{c}$ in $\lambda^{-1}(\Lambda)\subset \cC(\C)$.

By (6.1) Theorem, p.\,179, of \cite{Huse}, $f(\lambda)$ and $g(\lambda)$ are basis elements of the period lattice $L_\lambda$ of $E_\lambda$ with respect to
$\frac{dX}{2Y}$. Therefore, 
if $\exp_\lambda$ is the associated exponential map from $\C$ to $E_\lambda(\C)$, we have
$$
\exp_\lambda(f(\lambda))=\exp_\lambda(g(\lambda))=O,
$$
where $O$ denotes the origin in $E_{\lambda}$.
Let $P_j=(x_j,y_j)$, where $x_j,y_j$ are coordinate functions in $\C(\cC)$. We can suppose that, for every $j$, 
$x_j\neq 0,1,\lambda$ identically, otherwise the corresponding $P_j$ would be identically 2-torsion, contradicting the hypothesis of Theorem \ref{mainthm}.

Now, we want to define suitable functions $z_j(\bo{c})$ such that 
$\exp_{\lambda(\bo{c})}(z_j(\bo{c}))=P_j(\bo{c})$; in other words, we want $z_j$ to be the elliptic logarithm of $P_j$. 

Let $\widehat{\cC}$ be the subset of points $\bo{c}  \in \cC$ such that $\lambda(\bo{c}),x_j(\bo{c})\neq 0,1,\infty$, $ x_j(\bo{c})\neq \lambda (\bo{c}) $ for every $j=1,\dots, n$ and $\bo{c}$ is not a singular point or a point on which the differential of $\lambda$ vanishes.

Note that, in this way, we excluded finitely many $\bo{c}\in \cC$, and these are algebraic points of $\cC$. Moreover, on $\widehat{\cC}$, the coordinate function $\lambda$ has everywhere a local inverse.

We now follow the construction of \cite{MZ12}, p.\,459. Fix a point $\bo c_{*}\in \widehat{\cC}$ and choose a path in the $x_j$-plane from $x_j(\bo c_{*})$ to $\infty$ and not passing through 
$0,1$ and $\lambda(\bo c_{*})$. We also fix a determination of $Y=\sqrt{X(X-1)(X-\lambda(\bo c_*))}$ equal to $y_j(\bo c_{*})$ at $X=x_j(\bo c_{*})$. Therefore, the 
path corresponds to a path on the elliptic curve $E_{\lambda(\bo c_*)}$ from the point $P_j(\bo c_*)$ to the origin $O$. Hence we can define $z_j(\bo c_{*})$ as
the integral
$$
z_j(\bo{c}_{*})=\int_{x_j(\bo c_{*})}^{\infty} \frac{dX}{2\sqrt{X(X-1)(X-\lambda(\bo c_{*}))}}.
$$
We can extend it to a $\bo{c}$ close to $\bo c_{*}$ by
$$z_j(\bo{c}) =\int_{x_j(\bo c_{*})}^{\infty}\frac{dX}{2\sqrt{X(X-1)(X-\lambda(\bo c)})}+ \int_{x_j(\bo c_{*})}^{x_j(\bo c)}\frac{dX}{2\sqrt{X(X-1)(X-\lambda(\bo c)})} .$$ In fact, in the first integral on the right we use the same path fixed 
before and the integrand is determined by continuity from the previously chosen determination of $Y$. Hence, this term is an analytic function in 
$\lambda(\bo c)$. For the second term, we can take any local path from ${x_j(\bo c_{*})}$ to ${x_j(\bo c)}$. We can extend the integrand as a 
double power series in $\lambda(\bo c)-\lambda(\bo c_{*})$ and in $X-x_j(\bo c_{*})$; the result will be a double power series in 
$\lambda(\bo c)-\lambda(\bo c_{*})$ and $x_j(\bo c)-x_j(\bo c_{*})$. Notice that we have at any rate $\exp_{\lambda(\bo c)}(z_j(\bo{c}))=P_j(\bo{c})$ for every $j=1, \ldots, n$.

In this way, fixing a $\bo{c}_* \in \lambda^{-1}(\Lambda)\cap \widehat{\cC}$, the functions $z_1, \dots ,z_n$ are well defined 
on a small neighborhood $N_{*}$ on $\cC$. Moreover, if we take $N_{*}$ small enough, we can see them as analytic functions of $\lambda$ on $\lambda^{-1}(N_*)$. \\

We will need the following transcendence result.

\begin{lemma}\label{lemtransc1}
The functions $z_1, \dots, z_n$ are algebraically independent over $\C(f,g)$ on $N_*$.
\end{lemma}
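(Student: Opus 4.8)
The plan is to argue by contraposition: I will assume that $z_1, \dots, z_n$ are algebraically dependent over $\C(f,g)$ on $N_*$ and from this produce integers $a_1, \dots, a_n$, not all zero, with $a_1 P_1 + \dots + a_n P_n = O$ identically on $\cC$, contradicting the hypothesis of Theorem \ref{mainthm}. The one genuinely external ingredient is a transcendence theorem of Bertrand \cite{Bertr}; the rest is a matter of setting up the right function field and translating.

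First I would fix the algebraic framework. Work inside the field $\mathcal{M}$ of germs at $\bo c_*$ of meromorphic functions on $\cC$, which via the local inverse of $\lambda$ on $\widehat{\cC}$ we identify with germs of meromorphic functions of $\lambda$ at $\lambda(\bo c_*)$. Let $K=\C(\cC)\subseteq\mathcal{M}$; then $x_j,y_j\in K$, while $f,g,z_1,\dots,z_n\in\mathcal{M}$. Since $\cC$ is an irreducible curve and $\lambda$ is non-constant, $\lambda$ is transcendental over $\C$, and hence so is the $j$-invariant $256(\lambda^2-\lambda+1)^3/(\lambda^2(\lambda-1)^2)$; thus the generic fiber $\mathcal{E}_K$ of the Legendre family is non-isotrivial and $\mathrm{End}_{\overline K}(\mathcal{E}_K)=\Z$. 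The $P_j$ are $K$-points of $\mathcal{E}_K$, each $z_j$ is an elliptic logarithm of $P_j$, and $f,g$ form a basis of the period lattice. Note that $\C(f,g)\subseteq K(f,g)$ inside $\mathcal{M}$, so algebraic dependence of the $z_j$ over $\C(f,g)$ entails algebraic dependence over $K(f,g)$; and by the identity theorem a polynomial relation valid on $N_*$ is valid throughout $\mathcal{M}$, so the qualifier ``on $N_*$'' is harmless.

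Now I would invoke Bertrand's theorem \cite{Bertr} --- the extension of Ax's theorem to non-isotrivial elliptic schemes, resting on Manin's theorem of the kernel --- in the following form: if $\mathcal{E}_K$ is non-isotrivial and no nonzero vector $(a_1,\dots,a_n)\in\Z^n$ makes $a_1P_1+\dots+a_nP_n$ a torsion section, then $z_1,\dots,z_n$ are algebraically independent over $K(f,g)$. (The equality $\mathrm{End}_{\overline K}(\mathcal{E}_K)=\Z$ is what ensures that the relevant relations among the $P_j$ are over $\Z$ rather than over a CM order.) Contrapositively, the assumed algebraic dependence yields a nonzero $(a_1,\dots,a_n)\in\Z^n$ with $a_1P_1+\dots+a_nP_n=Q$ for some torsion section $Q$; writing $N$ for the order of $Q$, we get $(Na_1)P_1+\dots+(Na_n)P_n=O$ identically on $\cC$, a nontrivial $\Z$-linear relation. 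This contradicts the hypothesis of Theorem \ref{mainthm}, so $z_1,\dots,z_n$ are algebraically independent over $K(f,g)$, a fortiori over $\C(f,g)$.

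I expect the delicate step to be the precise invocation of Bertrand's theorem: one must confirm that its hypotheses are those at hand --- non-isotriviality (immediate for the Legendre family), the exact notion of independence of the sections (which, because $\mathrm{End}_{\overline K}(\mathcal{E}_K)=\Z$, is literally the $\Z$-independence hypothesis of Theorem \ref{mainthm}), the fact that its ``period field'' is $K(f,g)$ for the basis $f,g$ chosen in Section \ref{periods}, and that the branch of the elliptic logarithm picked near $\bo c_*$ is irrelevant (a different branch changes each $z_j$ by a $\Z$-combination of $f,g$, which alters neither $K(f,g,z_1,\dots,z_n)$ nor the dependence over $K(f,g)$). Should \cite{Bertr} not be stated in exactly this shape, I would reach the same end in two steps: an Ax-type statement first forces the algebraic dependence to come from a relation $a_1z_1+\dots+a_nz_n=c_1f+c_2g$ with $(a_1,\dots,a_n)\in\Z^n$ nonzero and $c_1,c_2\in\C$, and then Manin's theorem of the kernel (applicable as $\mathcal{E}_K$ is non-isotrivial) forces $c_1,c_2\in\Q$, whence $a_1P_1+\dots+a_nP_n$ is torsion and one finishes as before.
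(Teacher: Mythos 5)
Your proposal is correct and follows essentially the same route as the paper: both reduce the claim to Th\'eor\`eme 5 of \cite{Bertr} and verify its hypotheses from the hypothesis of Theorem \ref{mainthm}. The only cosmetic difference is that the paper verifies the $\Z$-linear independence of $z_1,\dots,z_n,f,g$ directly (and explicitly records that $\wp_\lambda(z_j)=x_j-\frac13(\lambda+1)$ is an algebraic function of $\lambda$, which your setup via $P_j\in \mathcal{E}_K(K)$ with $K=\C(\cC)$ encodes implicitly), whereas you argue by contraposition, produce a torsion section $\sum a_jP_j=Q$, and clear the torsion by multiplying by its order; the two bookkeepings are interchangeable.
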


\begin{proof}
The $z_1, \dots, z_n, f,g$ are analytic functions of $\lambda$, linearly independent over $\Z$. Indeed, a relation $a_1z_1+ \dots + a_n z_n = a_{n+1}f+a_{n+2}g$, with integer coefficients, would map via $\exp_\lambda$ to a relation of the form \eqref{rel} on $N_*$, and therefore on the whole $\cC$, which cannot hold by the hypothesis of the Theorem. Moreover, if $\wp_\lambda$ is the Weierstrass $\wp$-function associated to $L_\lambda$, the $\wp_\lambda(z_i)$ are algebraic functions of $\lambda$ because $\wp_\lambda(z_j)=x_j - \frac{1}{3}(\lambda +1 )$ (see (3.8) on p.\,7 of \cite{MasserZannier08}). Therefore, the hypotheses of Th\'eor\`eme 5 on p.\,136 of \cite{Bertr} are satisfied and we can apply it to get the claim. 
\end{proof}

We would like now to extend our functions $f,g,z_1, \ldots, z_n$ on $\widehat{\cC}$.

If $\bo{c}\in \widehat{\cC}$, one can continue $f$ and $g$ to a neighborhood $N_{\bo{c}}$ of $\bo{c}$.
In fact, if we choose $\bo{c}\in \widehat{\cC}$ and a path from $\bo{c}_*$ to $\bo{c}$ lying in $\widehat{\cC}$, we can easily continue $f$ and
$g$ along the path using (\ref{f,g}). \\

To continue $z_j$ from a point $\bo c_*$ to a $\bo c$ in $\widehat{\cC}$, it is sufficient to verify that, if $N_1$ and $N_2$ are two open small 
subsets in $\widehat{\cC}$, with $N_1 \cap N_2$ connected, and $z_j$ has analytic definitions $z_j'$ on $N_1$ and $z_j''$
on $N_2$, then it has an analytic definition on the union $N_1 \cup N_2$. 
But we saw that $\exp_\lambda(z_j)=P_j$ for every $j=1, \ldots, n$, hence on $N_1 \cap N_2$ we have $\exp_\lambda(z_j')=\exp_\lambda(z_j'')$.
This means that there exist rational integers $u,v$ with $z_j''=z_j'+uf+vg$ on this intersection, and they must be constant there. Hence it is enough 
to change $z_j''$ to $z_j''-uf-vg $ on $N_2$. 

Using the same path, it is now clear that we can continue the function $(f,g,z_1, \ldots, z_n)$ from a small neighborhood of $\bo c_*$ to a small 
neighborhood $N_{\bo c}\subseteq \widehat{\cC}$ of $\bo c$, and the obtained function $(f^{\bo{c}},g^{\bo{c}},z_1^{\bo{c}}, \dots ,z_n^{\bo{c}})$ is
analytic on $N_{\bo c}$. Moreover, the functions preserve the algebraic independence, as the following lemma shows.

\begin{lemma}\label{lemtransc}
The functions $z_1^{\bo{c}}, \dots, z_n^{\bo{c}}$ are algebraically independent over $\C(f^{\bo{c}},g^{\bo{c}})$ on $N_{\bo{c}}$. 
\end{lemma}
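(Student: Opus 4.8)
The plan is to reduce the statement to Lemma \ref{lemtransc1}, which is exactly the same assertion at the base point $\bo{c}_*$, by exploiting the fact that the continuation $(f^{\bo{c}},g^{\bo{c}},z_1^{\bo{c}},\dots,z_n^{\bo{c}})$ along a path inside $\widehat{\cC}$ is obtained by analytic continuation of $(f,g,z_1,\dots,z_n)$. First I would recall that algebraic independence of a tuple of analytic functions over a subfield is a property that is preserved under analytic continuation along a path: if $z_1^{\bo{c}},\dots,z_n^{\bo{c}}$ satisfied a nontrivial polynomial relation $P(f^{\bo{c}},g^{\bo{c}},z_1^{\bo{c}},\dots,z_n^{\bo{c}})=0$ with coefficients in $\C$ on $N_{\bo{c}}$, then, continuing this identity back along the (reverse of the) path from $\bo{c}_*$ to $\bo{c}$ used to define the functions, and using the identity theorem (Ch.~III, Theorem 1.2 of \cite{LangCompl}) on each overlapping chart, we would obtain $P(f,g,z_1,\dots,z_n)=0$ on $N_*$. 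This contradicts Lemma \ref{lemtransc1}. The only subtlety here is that the path of continuation is chosen inside $\widehat{\cC}$, so the functions $f,g$ stay nonzero and the branch choices in the definition of the $z_j$ are consistent; but this is precisely how the continuation was set up in the paragraphs preceding the lemma, where it was already checked that the discrepancy $z_j''-z_j'$ between two overlapping analytic definitions is an integer combination $uf+vg$ that one absorbs into the definition.

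Alternatively, and perhaps more cleanly, I would rerun the proof of Lemma \ref{lemtransc1} verbatim at the point $\bo{c}$: the functions $z_1^{\bo{c}},\dots,z_n^{\bo{c}},f^{\bo{c}},g^{\bo{c}}$ are analytic functions of $\lambda$ on $N_{\bo{c}}$; they are $\Z$-linearly independent because a relation $a_1 z_1^{\bo{c}}+\dots+a_n z_n^{\bo{c}}=a_{n+1}f^{\bo{c}}+a_{n+2}g^{\bo{c}}$ with integer coefficients would map under $\exp_{\lambda}$ to a relation $a_1 P_1+\dots+a_n P_n=O$ on $N_{\bo{c}}$, hence on all of $\cC$ by analytic continuation (the $P_j$ are the same algebraic functions regardless of the chart), contradicting the hypothesis of Theorem \ref{mainthm}; and $\wp_{\lambda}(z_j^{\bo{c}})=x_j-\tfrac13(\lambda+1)$ remains an algebraic function of $\lambda$. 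So the hypotheses of Th\'eor\`eme 5, p.\,136 of \cite{Bertr} hold on $N_{\bo{c}}$ just as they did on $N_*$, and the conclusion follows.

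The main obstacle, such as it is, lies in being careful that the $\Z$-linear independence of $z_1^{\bo{c}},\dots,z_n^{\bo{c}},f^{\bo{c}},g^{\bo{c}}$ really does reduce to the hypothesis of the Theorem: one must know that a relation holding on the small neighborhood $N_{\bo{c}}$ propagates to an identity on the whole irreducible curve $\cC$. This is where the paper's standing assumption that $\cC$ is irreducible, together with the identity theorem for analytic functions and the fact that the $x_j,y_j,\lambda$ are rational functions on $\cC$, does the work: a polynomial identity among these coordinate functions valid on a nonempty open subset of $\cC(\C)$ holds identically. Once this is in place, Th\'eor\`eme 5 of \cite{Bertr} — which asserts that for a non-isotrivial elliptic curve the elliptic logarithms of points with algebraic abscissas that are $\Z$-linearly independent together with the periods are algebraically independent over the field generated by the periods — applies word for word, and the lemma is proved.

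\begin{proof}
By construction, the tuple $(f^{\bo{c}},g^{\bo{c}},z_1^{\bo{c}},\dots,z_n^{\bo{c}})$ is the analytic continuation of $(f,g,z_1,\dots,z_n)$ along a path in $\widehat{\cC}$ from $\bo{c}_*$ to $\bo{c}$. We verify directly that the hypotheses of Th\'eor\`eme 5 on p.\,136 of \cite{Bertr} are satisfied on $N_{\bo{c}}$. First, $f^{\bo{c}},g^{\bo{c}}$ are again a basis of the period lattice $L_\lambda$ and $\exp_\lambda(z_j^{\bo{c}})=P_j$ on $N_{\bo{c}}$, since these identities are preserved under analytic continuation. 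The functions $z_1^{\bo{c}},\dots,z_n^{\bo{c}},f^{\bo{c}},g^{\bo{c}}$ are analytic functions of $\lambda$ on $N_{\bo{c}}$, and they are linearly independent over $\Z$: a relation $a_1 z_1^{\bo{c}}+\dots+a_n z_n^{\bo{c}}=a_{n+1}f^{\bo{c}}+a_{n+2}g^{\bo{c}}$ with integer coefficients, not all zero, would map via $\exp_\lambda$ to a relation $a_1 P_1+\dots+a_n P_n=O$ on $N_{\bo{c}}$; since $\cC$ is irreducible and the $P_j$ are given by fixed rational functions on $\cC$, this identity, holding on a nonempty open subset of $\cC(\C)$, holds identically on $\cC$, contradicting the hypothesis of Theorem \ref{mainthm}. (If the relation involves only the periods, i.e. $a_1=\dots=a_n=0$, it contradicts the $\R$-linear independence of $f(\lambda)$ and $g(\lambda)$.) Finally, $\wp_\lambda(z_j^{\bo{c}})=x_j-\tfrac13(\lambda+1)$ is an algebraic function of $\lambda$. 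Thus Th\'eor\`eme 5 of \cite{Bertr} applies and yields that $z_1^{\bo{c}},\dots,z_n^{\bo{c}}$ are algebraically independent over $\C(f^{\bo{c}},g^{\bo{c}})$ on $N_{\bo{c}}$.
\end{proof}
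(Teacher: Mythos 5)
Your proposal is correct, and in fact you lay out both possible routes: the first paragraph of your plan is essentially the paper's entire proof (which really is just the one sentence ``any algebraic relation can be continued to a neighborhood $N_*$, contradicting Lemma \ref{lemtransc1}''), while the proof you actually write out takes the alternative route of re-verifying the hypotheses of Bertrand's Th\'eor\`eme 5 directly at $\bo{c}$. The two are of course closely related, but they do differ in what they take as input: the paper's argument needs only that algebraic relations transport along the continuation (so one can invoke Lemma \ref{lemtransc1} as a black box), whereas yours redoes the verification that $z_1^{\bo{c}},\dots,z_n^{\bo{c}},f^{\bo{c}},g^{\bo{c}}$ are $\Z$-linearly independent and that $\wp_\lambda(z_j^{\bo{c}})$ is algebraic over $\C(\lambda)$. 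Your version is a little longer but arguably more self-contained, and it also has the small merit of explicitly handling the degenerate case $a_1=\dots=a_n=0$ (a relation among the periods alone), which the paper leaves implicit. Both arguments are valid; the choice is a matter of taste between brevity and explicitness.
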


\begin{proof}
Any algebraic relation can be continued to a neighborhood $N_*$ of some $\bo{c}_* \in \lambda^{-1}(\Lambda)$, contradicting Lemma \ref{lemtransc1}.
\end{proof}

Furthermore, the lattice $L_{\lambda}$ is still generated by $f^{\bo{c}} $ and $g^{\bo{c}} $ on $N_{\bo{c}}$, see Lemma 6.1 of \cite{MZ12} or Lemma 4.1 of \cite{MasserZannier10}.

Now fix $\bo{c} \in \cC$ and $N_{\bo c}\subseteq \widehat{\cC}$. Since we are avoiding 
singular points and points on which the differential of $\lambda$ vanishes, $\lambda$ 
gives an analytic isomorphism $\lambda:N_{\bo{c}}\rightarrow \lambda(N_{\bo{c}})$. Then, we can view $z_1^{\bo{c}}, \dots, z_n^{\bo{c}},f^{\bo{c}},g^{\bo{c}}$ as analytic functions on $\lambda(N_{\bo{c}})$.

\section{Linear relations on a fixed curve}

In this section we prove a general fact about linear relations on elliptic curves. 

For a point $(\alpha_1, \dots ,\alpha_N) \in \overline{\Q}^N$, the absolute logarithmic Weil height $h(\alpha_1, \dots ,\alpha_N)$ is defined by
$$
h(\alpha_1, \dots ,\alpha_N)= \frac{1}{[\Q(\alpha_1, \dots ,\alpha_N):\Q]} \sum_v \log \max \{1,| \alpha_1|_v, \dots ,|\alpha_N|_v\},
$$
where $v$ runs over a suitably normalized set of valuations of $\Q(\alpha_1, \dots ,\alpha_N)$.

Let $\alpha$ be an algebraic number and consider the Legendre curve $E=E_\alpha$ defined by the equation
$ Y^2=X(X-1)(X-\alpha)$. Moreover, let $P_1, \dots , P_n$ be linearly dependent points on $E$, defined over some finite extension $K$ of $\Q(\alpha)$ of degree $\kappa=[K:\Q]$. Suppose that $P_1, \dots , P_n$ have N\'eron-Tate height $\widehat{h}$ at most $q$
(for the definition of N\'eron-Tate height, see for example p.\,255 of \cite{Masser88}). In case the $P_1, \dots , P_n$ are all torsion, i.e., $\widehat{h}(P_j)=0$ for all $j$, we set $q=1$.
We define
$$
L(P_1, \dots , P_n)=\{ (a_1, \dots , a_n) \in \Z^n: a_1P_1+\dots +a_nP_n= O \}.
$$
This is a sublattice of $\Z^n$ of some positive rank $r$. We want to show that $L(P_1, \dots , P_n)$ has a set of generators with small max norm 
$|\bo{a}|=\max \{|a_1|, \dots ,|a_n|\}$.

\begin{lemma}\label{lemgenerators}
Under the above hypotheses, there are generators $\bo{a}_1, \dots , \bo{a}_r$ of $L(P_1, \dots , P_n)$ with
$$
|\bo{a}_i|\leq \gamma_1  \kappa^{\gamma_{2}} (h(\alpha)+1)^{2n} q^{\frac{1}{2}(n-1)},
$$
for some positive constants $\gamma_1, \gamma_2$ depending only on $n$.
\end{lemma}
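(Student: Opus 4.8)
The strategy is a standard geometry-of-numbers argument applied to the lattice $L = L(P_1,\dots,P_n) \subseteq \Z^n$, controlled by height estimates for points on elliptic curves. The plan is to produce generators for $L$ one of two ways depending on whether the points are torsion, and in the non-torsion case to invoke a quantitative result bounding the smallest non-trivial relation, then pass to a full basis via a successive-minima / Minkowski-type inequality.

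First I would treat the torsion case separately: if all $P_j$ are torsion on $E_\alpha$, then the order of each $P_j$ is bounded by a quantity of the form $\gamma\,\kappa^{\gamma'}(h(\alpha)+1)^{\gamma''}$ by a theorem of Masser \cite{Masser88} (explicit torsion bounds on elliptic curves over number fields of bounded degree and height), so $L$ contains a finite-index sublattice $m_1\Z \times \cdots \times m_n\Z$ with each $m_j$ bounded; standard reduction of $\Z^n/(m_1\Z\times\cdots\times m_n\Z)$ then yields generators of $L$ whose max-norm is bounded by $\prod m_j$, hence of the claimed shape (with $q=1$ the bound is as stated). In the general case, let $r = \operatorname{rk} L$. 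The key input is an elliptic analogue of Siegel's lemma / the "small relation" bound: there exists a non-zero $\bo{a} \in L$ with $|\bo{a}| \ll \kappa^{\gamma}(h(\alpha)+1)^{?}\, q^{(n-1)/2}$, which follows from combining a lower bound for $\widehat h$ on non-torsion points (a David-type bound \cite{David97}) with an upper bound for $\widehat h(a_1P_1 + \cdots + a_nP_n)$ in terms of $|\bo{a}|$ and $q$: the function $\bo{a}\mapsto \widehat h(a_1P_1+\cdots+a_nP_n)$ is a positive semi-definite quadratic form on $\Z^n$ whose coefficients are $\ll q$ by Cauchy–Schwarz, so an element of the kernel lattice of small norm exists by a pigeonhole / Minkowski argument, and any lattice element of height-form value $0$ actually lies in $L$ because a non-torsion combination has $\widehat h$ bounded below.

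Next I would bootstrap from one short vector to a full basis. Applying the quadratic form $Q(\bo a)=\widehat h(\sum a_j P_j)$ on $\Z^n$: its kernel is exactly $L$ (after accounting for torsion as above), $Q$ descends to a positive definite form on $\Z^n/L \cong \Z^{n-r}$ of bounded coefficients and of determinant $\gg (\text{something explicit})^{-1}$ by the David lower bound applied to the non-torsion directions; Minkowski's second theorem then bounds the covolume of $L$ (inside the subspace it spans) from above, and combined with the explicit short vector and a successive-minima argument (each successive minimum of $L$ is $\ll$ the covolume divided by the product of the previous ones) one gets $r$ linearly independent vectors in $L$ of the required max-norm. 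Finally I would upgrade "linearly independent vectors of small norm" to "generators of small norm": this is the classical fact that a lattice $L$ of rank $r$ has a basis bounded by a constant (depending only on $r$) times the max of the successive minima, or alternatively one runs a Hermite-normal-form reduction on the short independent set, absorbing the loss into $\gamma_1, \gamma_2$.

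The main obstacle is bookkeeping the exponents: one must track exactly how the degree $\kappa$, the height $h(\alpha)$, and the height bound $q$ enter David's lower bound for $\widehat h$ and Masser's torsion bound, and how these combine through the determinant estimate — getting the clean exponents $\kappa^{\gamma_2}$, $(h(\alpha)+1)^{2n}$, $q^{(n-1)/2}$ requires being careful that the covolume of $\Z^n/L$ contributes at most the $(n-1)$ power of $q$ (not $n$) and that the dependence on $\alpha$ only comes through $h(\alpha)$ via the David bound (which features $h(\alpha)$ to a power that, after raising to make the covolume estimate work over all $\le n-1$ missing directions, becomes $2n$). The o-minimality and transcendence machinery of the earlier sections plays no role here; this is purely Diophantine and the delicate part is purely the explicit constants.
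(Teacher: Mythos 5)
Your proposal identifies the right Diophantine ingredients (a torsion-order bound, a lower bound for the N\'eron--Tate height of non-torsion points, the upper bound $\widehat h(P_j)\leq q$, the change to Weierstrass form so the known statements apply), but it takes a noticeably heavier route than the paper on the geometry-of-numbers step. The paper does \emph{not} redo any Minkowski/successive-minima analysis: it invokes Theorem~E of \cite{Masser88} as a black box, which states directly that $L(Q_1,\dots,Q_n)$ is generated by vectors of max norm at most $n^{n-1}\omega(q/\eta)^{(n-1)/2}$ with $\omega=|\widetilde E_{\mathrm{tors}}(K)|$ and $\eta=\inf\widehat h$ over non-torsion points. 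All that remains is to bound $\omega$ from above (via Th\'eor\`eme~1.2(i) of \cite{David97}, using $j_{\widetilde E}$ as a rational function of $\alpha$) and $\eta$ from below (via Corollary~1 of \cite{Masser89}, with the constant made polynomial in $\kappa$ as David notes). Your plan instead re-derives a version of Masser's Theorem~E from first principles via the quadratic form $Q(\bo a)=\widehat h(\sum a_jP_j)$ and Minkowski; this could in principle be made to work (it is close to how Masser proves it), but it is exactly the content you could have cited.

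Two points in the re-derivation need repair. First, it is not true that $Q(\bo a)=0$ implies $\bo a\in L$: vanishing of $Q$ means $\sum a_jP_j$ is torsion, not zero, so the kernel of $Q$ on $\Z^n$ is the saturation of $L$, in which $L$ has index dividing a power of $\omega$. This is precisely why the factor $\omega$ appears in Masser's bound, and your argument must incorporate it rather than assert that the kernel ``actually lies in $L$.'' Second, the covolume step as written points the wrong way: applying Minkowski's first theorem to the positive definite form $Q$ on $\Z^n/(\mathrm{span}\,L\cap\Z^n)$ together with the lower bound $Q\geq\eta$ off $L$ gives a \emph{lower} bound on the covolume of that quotient lattice, whereas to bound successive minima of $L$ you need an \emph{upper} bound on $\mathrm{covol}(L)$. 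The correct route (roughly Masser's) is to pick a Minkowski-reduced basis of $\langle P_1,\dots,P_n\rangle/\mathrm{tors}$ with respect to $\widehat h$, express the $P_j$ in it, and bound the resulting integer coefficients; I'd encourage you to either fill this in carefully or simply cite Theorem~E as the paper does. Finally, the citations are swapped relative to the paper (it uses \cite{David97} for the torsion bound and \cite{Masser89} for the height lower bound, not the other way around); both authors have results of both flavors, so this is cosmetic, but worth getting right when writing the proof out.
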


\begin{proof}
The Weierstrass form $\widetilde{E}=\widetilde{E}_\alpha$ of $E=E_\alpha$ has equation
$$
\widetilde{Y}^2=4\widetilde{X}^3-g_2 \widetilde{X}-g_3,
$$
where $g_2=\frac{4}{3} (\alpha^2 - \alpha+1)$ and $g_3=\frac{4}{27}(\alpha-2)(\alpha+1)(2\alpha-1)$ (see (3.7) of \cite{MasserZannier10}). The isomorphism $\phi$  from $E$ to 
$\widetilde{E}$ is given by
$$
\widetilde{X}=X-\frac{1}{3}(\alpha+1), \ \ \widetilde{Y}=2Y.
$$

Now, $\widetilde{E}$ is clearly defined over $\Q(\alpha)$ and any linear relation $a_1P_1+ \dots + a_nP_n=O$ on $E$ carries on to $\widetilde{E}$ and 
vice versa. Moreover, the $Q_i=\phi(P_i)$ will have coordinates in $K$ and the same N\'eron-Tate height of the $P_i$, hence also bounded by $q$.

First, suppose that at least one of the points has infinite order. By Theorem E of \cite{Masser88}, if $Q_1, \dots, Q_n$ are linearly dependent points on $\widetilde{E}(K)$, of N\'eron-Tate height at most $q \geq \eta$, 
then $L(Q_1, \dots, Q_n)$ is generated by vectors with max norm at most 
$$
n^{n-1} \omega \left(\frac{q}{\eta}\right)^{\frac{1}{2}(n-1)},
$$
where $\omega=\left|\widetilde{E}_{\footnotesize{\mbox{tors}}}(K)\right|$ and $\eta=\inf \widehat{h}(P)$, for $P\in \widetilde{E}(K)\setminus \widetilde{E}_{\footnotesize{\mbox{tors}}}(K)$.
We need to bound $\omega$ and $\eta$. The constants $\gamma_3, \dots, \gamma_9$ are absolute constants.

For the first we use a result of David \cite{David97}.
By Th\'eor\`eme 1.2 (i) of \cite{David97}, choosing any archimedean $v$ and noting that, by David's definition, 
$h_v\left(\widetilde{E}\right)\geq \frac{\sqrt{3}}{2}$, one has
$$
\omega \leq \gamma_3 (\kappa h+\kappa \log \kappa),
$$
where $h=\max \lg 1,h\left(j_{\widetilde{E}}\right) \rg$. Now, $j_{\widetilde{E}}=2^8\frac{(\alpha^2-\alpha+1)^3}{\alpha^2(\alpha-1)^2}$ 
(see for instance \cite{Huse}, p.\,83). Therefore, $h\leq \gamma_4 (h(\alpha)+1)$ and we have
\begin{equation} \label{tor}
\omega \leq \gamma_5 (h(\alpha)+1) \kappa^2.
\end{equation}
For the lower bound on $\eta$, we use a result of Masser (Corollary 1 of \cite{Masser89}). In Masser's bound a constant depending on $\kappa$ appears in the denominator. 
However, going through the proof one can see that this constant is polynomial in $\kappa$, as noted by David on p.\,109 of \cite{David97}. Therefore,
$$
\eta\geq \frac{\gamma_{6} }{w\kappa^{\gamma_7 + 3}(w+\log \kappa)^2}\geq \gamma_{8} \kappa^{-(\gamma_{7}+5)}w^{-3},
$$
where $w=\max \{1, h(g_2) ,h(g_3)  \}$. As $g_2$ and $g_3$ are polynomials in $\alpha$, $w\leq \gamma_9 (h(\alpha)+1)$. 
Consequently, $L(Q_1, \dots , Q_n)$ will have generators of norms at most
$$
\gamma_1 \kappa^{\gamma_{2}} (h(\alpha)+1)^{2n} q^{\frac{1}{2}(n-1)},
$$
with $\gamma_1, \gamma_2$ depending only on $n$.

In case all the points are torsion points, it is clear that one can take $|\bo{a}_i|\leq \omega$ and use \eqref{tor}.
\end{proof}

\section{Bounded height}

In this section we see that the height of the points on the curve $\cC$ for which there is at least one dependence relation is bounded and a few consequences of this fact.

Let $k$ be a number field over which $\cC$ is defined. Suppose also the finitely many points we excluded from $\cC$ to get $\widehat{\cC}$, which are algebraic, are defined over $k$. Clearly, there are $f_1, \dots , f_n \in k[T]$ such that $f_j(x_j,\lambda)=0$ for every $j$, identically on the curve.

Let $\cC'$ be the set of points of $\widehat{\cC}$ such that $P_1,\dots ,P_n$ satisfy two independent relations on the specialized curve and
let $\bo{c}_0 \in \cC'$. Since  $\cC$ is defined over $\Qbar$, the $x_j(\bo{c}_0)$ and $\lambda(\bo{c}_0)$ must be algebraic, unless the $P_j$ are identically 
linearly dependent, which we excluded by hypothesis. Then by Silverman's Specialization Theorem \cite{Sil83} (see also Appendix C of \cite{Zannier}) there exists $\gamma_1>0$ such that 
\begin{equation}\label{boundedheight}
h(\lambda(\bo{c}_0))\leq \gamma_1.
\end{equation}

We see now a few consequences of this bound.
If $\delta>0$ is a small real number, let us call
\begin{equation*} 
\Lambda_{\delta}=\lg t \in \C: |t| \leq \frac{1}{\delta}, |t-\lambda(\bo{c})|\geq  \delta \mbox{ for all $\bo{c} \in \cC\setminus \widehat{\cC}$} \rg .
\end{equation*}

\begin{lemma}\label{lemconj}
There is a positive $\delta$ such that there are at least $\frac{1}{2}[k(\lambda(\bo{c}_0)):k]$ different $k$-embeddings $\sigma$ of $k(\lambda(\bo{c}_0))$ in $\C$ such 
that $\sigma (\lambda(\bo{c}_0))$ lies in $\Lambda_{\delta}$ for all $\bo{c}_0 \in \cC'$. 
\end{lemma}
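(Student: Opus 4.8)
The goal is to show that a positive proportion of the $k$-conjugates of $\lambda(\bo{c}_0)$ avoid the small complex numbers and the excluded values, uniformly over $\bo{c}_0 \in \cC'$. The only input we really have is the height bound \eqref{boundedheight}, $h(\lambda(\bo{c}_0)) \leq \gamma_1$, which holds for every $\bo{c}_0 \in \cC'$. First I would translate the height bound into a statement about conjugates: writing $d = [k(\lambda(\bo{c}_0)):k]$ and letting $\sigma$ run over the $k$-embeddings of $k(\lambda(\bo{c}_0))$ into $\C$, the absolute values $|\sigma(\lambda(\bo{c}_0))|$ are controlled on average by $h(\lambda(\bo{c}_0))$ (up to a bounded factor coming from $[k:\Q]$), so only a bounded fraction of the $\sigma$ can have $|\sigma(\lambda(\bo{c}_0))|$ very large; concretely, by a Chebyshev/Markov-type count, at most $d/4$ of them can satisfy $|\sigma(\lambda(\bo{c}_0))| > 1/\delta$ once $1/\delta$ is chosen large enough in terms of $\gamma_1$ and $[k:\Q]$.

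**Avoiding the excluded values.** The second constraint is that $\sigma(\lambda(\bo{c}_0))$ must stay $\delta$-away from each of the finitely many numbers $\lambda(\bo{c})$ with $\bo{c} \in \cC \setminus \widehat{\cC}$; call these $\mu_1, \dots, \mu_s$, all algebraic and lying in $k$-closure. For a fixed $\mu_i$, the embeddings $\sigma$ with $|\sigma(\lambda(\bo{c}_0)) - \mu_i| < \delta$ correspond to conjugates of $\lambda(\bo{c}_0)$ close to a fixed algebraic number; I would bound their number by noting that $\prod_\sigma |\sigma(\lambda(\bo{c}_0)) - \mu_i|$ is, up to normalization at the finite and archimedean places, the norm of an algebraic number whose height is bounded (since $h(\lambda(\bo{c}_0))$ and $h(\mu_i)$ are both bounded), hence this product is bounded below by $c^d$ for some fixed $c = c(\delta) > 0$ — unless the factor is exactly zero, i.e.\ $\lambda(\bo{c}_0) = \mu_i$, which is excluded because $\bo{c}_0 \in \widehat{\cC}$. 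A product lower bound $c^d$ combined with a product over $d$ terms each at most (something bounded, say $\gamma_1' $) forces the number of factors below $\delta$ to be at most $\beta(\delta) \cdot d$ with $\beta(\delta) \to 0$ as $\delta \to 0$. Summing over the $s$ values $\mu_1,\dots,\mu_s$, at most $s\beta(\delta) d$ embeddings are spoiled this way.

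**Conclusion.** Choosing $\delta$ small enough that $s\beta(\delta) \leq 1/4$, and $1/\delta$ correspondingly large (the two requirements are compatible since shrinking $\delta$ only helps the first count), the number of ``bad'' embeddings — those with $|\sigma(\lambda(\bo{c}_0))| > 1/\delta$ or $|\sigma(\lambda(\bo{c}_0)) - \mu_i| < \delta$ for some $i$ — is at most $d/4 + d/4 = d/2$, so at least $d/2 = \frac{1}{2}[k(\lambda(\bo{c}_0)):k]$ embeddings $\sigma$ have $\sigma(\lambda(\bo{c}_0)) \in \Lambda_\delta$. Crucially $\delta$ depends only on $\gamma_1$, on $[k:\Q]$, on $s$, and on the heights of the $\mu_i$ — all fixed quantities independent of $\bo{c}_0$ — so the same $\delta$ works for all $\bo{c}_0 \in \cC'$, as required.

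**Main obstacle.** The delicate point is the lower bound $\prod_\sigma|\sigma(\lambda(\bo{c}_0)) - \mu_i| \geq c^d$: one must account for all places, not just the archimedean ones, since $\lambda(\bo{c}_0) - \mu_i$ can be small at finite places too. The clean way is to use that $h(\lambda(\bo{c}_0) - \mu_i)$ is bounded (by $h(\lambda(\bo{c}_0)) + h(\mu_i) + \log 2$), hence the product formula gives $\sum_v \log|\lambda(\bo{c}_0)-\mu_i|_v = 0$ over the full normalized place set, and the contribution of the archimedean places where the difference is $\geq \delta$ together with the bounded total negative contribution at all other places yields the desired exponential lower bound on the archimedean product with the $<\delta$ factors removed; this is exactly the kind of estimate used to prove that algebraic numbers of bounded height cannot have too many conjugates near a fixed point, and I would phrase it that way to keep the argument short.
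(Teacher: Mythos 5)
The paper cites Lemma~8.2 of \cite{MZ12} for this statement rather than giving its own proof, and your height-counting approach is exactly what lies behind that reference: bounded Weil height of $\lambda(\bo{c}_0)$ controls both how many conjugates can be large and how many can cluster near any fixed algebraic number. The overall structure is sound. Two points deserve attention. First, your claim that $\lambda(\bo{c}_0)=\mu_i$ is ruled out ``because $\bo{c}_0\in\widehat{\cC}$'' is not immediate: $\widehat{\cC}$ is cut out by conditions on the point $\bo{c}$, not on the value $\lambda(\bo{c})$, and $\lambda$ is many-to-one, so there may well be $\bo{c}_0\in\widehat{\cC}$ with the same $\lambda$-value as some singular or differential-vanishing point in $\cC\setminus\widehat{\cC}$. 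For $\mu_i\in\{0,1\}$ your reasoning is fine (since $\lambda(\bo{c})\neq 0,1$ is one of the defining conditions of $\widehat{\cC}$), but for the other excluded values it is not. The fix is harmless -- only finitely many $\bo{c}_0$ have $\lambda(\bo{c}_0)\in\{\mu_1,\dots,\mu_s\}$, and for those $\lambda(\bo{c}_0)\in k$ so $d_0=1$ is bounded anyway -- but it should be said explicitly. Second, your product-formula step is more delicate than phrased: to deduce that few conjugates lie within $\delta$ of $\mu_i$ from $\prod_\sigma|\sigma(\lambda_0)-\mu_i|\geq c^{d}$, you also need an upper bound on the other factors, and $|\sigma(\lambda_0)|$ is not uniformly bounded. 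It is cleaner to observe that with $\alpha=\lambda_0-\mu_i$ one has $h(1/\alpha)=h(\alpha)\leq H$, so $\sum_\sigma\log^{+}|\sigma(\alpha)|^{-1}\leq[k(\lambda_0):\Q]\,H$, and if $N$ embeddings satisfy $|\sigma(\alpha)|<\delta$ then $N\log(1/\delta)\leq \kappa d_0 H$, giving $N\leq \kappa H d_0/\log(1/\delta)$ directly, with no need for a companion upper bound on the large factors. With these two repairs your argument is a complete proof along the same lines as the cited lemma of Masser and Zannier.
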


\begin{proof}
See Lemma 8.2 of \cite{MZ12}.
\end{proof}

\begin{remark}
We would like to point out that, as suggested by the referee, it might be possible to avoid the restriction to a compact domain and the use of the previous lemma by exploiting the work of Peterzil and Starchenko \cite{PetStar}, who proved that it is possible to define the Weierstrass $\wp$ function globally in the structure $\R_{\text{an,exp}}$.
\end{remark}

\begin{lemma}\label{lemboundheightdeg}
There exist positive constants $\gamma_2$, $\gamma_3$ such that, for every $\bo{c}_0 \in \cC'$ and every $j=1, \dots , n$, we have
$$
\widehat{h}(P_j(\bo{c}_0))\leq \gamma_2,
$$
and the $P_j(\bo{c}_0)$ are defined over some number field $K\supseteq k(\lambda(\bo{c}_0))$ with
$$
[K:\Q]\leq \gamma_3 d_0,
$$
where $d_0=[k(\lambda(\bo{c}_0)):k]$.
\end{lemma}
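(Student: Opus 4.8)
The plan is to deduce both assertions from the bounded-height statement \eqref{boundedheight} together with standard facts about specializations of points on families of elliptic curves over function fields. First I would address the bound on the degree $[K:\Q]$. Since $\cC$ is an affine curve defined over the number field $k$, for each $j$ the function $x_j \in k(\cC)$ satisfies an algebraic equation over $k(\lambda)$, say of degree $d_j$ (equivalently, $x_j$ is a root of the polynomial $f_j(\cdot,\lambda)$ introduced just before the statement). Hence $x_j(\bo c_0)$ lies in an extension of $k(\lambda(\bo c_0))$ of degree at most $d_j$. The $y_j(\bo c_0)$ then lie in an extension of degree at most $2$ more (from the defining equation $y_j^2 = x_j(x_j-1)(x_j-\lambda)$), and of course $\lambda(\bo c_0)$ itself is algebraic of degree $d_0$ over $k$. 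Taking $K$ to be the compositum of $k(\lambda(\bo c_0))$ with all these extensions, we get $[K:k(\lambda(\bo c_0))] \leq \prod_j 2 d_j =: \gamma_3$, a constant depending only on $\cC$, and therefore $[K:\Q] = [K:k(\lambda(\bo c_0))]\cdot[k(\lambda(\bo c_0)):k]\cdot[k:\Q] \leq \gamma_3 d_0$ after absorbing $[k:\Q]$ into $\gamma_3$.

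For the N\'eron--Tate height bound I would invoke the comparison between the N\'eron--Tate height $\widehat h_{E_{\lambda(\bo c_0)}}(P_j(\bo c_0))$ on the specialized curve and the Weil height $h(x_j(\bo c_0))$ of the abscissa, uniformly over the family. A clean way to do this is to use the standard estimate $\bigl|\widehat h_{E_\alpha}(P) - \tfrac12 h(x(P))\bigr| \leq c(E_\alpha)$, where $c(E_\alpha)$ depends only on (a bounded function of) $h(j_{E_\alpha})$ and hence, via $j_{E_\alpha} = 2^8 (\alpha^2-\alpha+1)^3/(\alpha^2(\alpha-1)^2)$, only on $h(\alpha)$; by \eqref{boundedheight} we have $h(\lambda(\bo c_0)) \leq \gamma_1$, so $c(E_{\lambda(\bo c_0)})$ is bounded by an absolute constant. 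It then remains to bound $h(x_j(\bo c_0))$. But $x_j$, as a function in $k(\cC)$, defines a morphism $\cC \to \mathbb{P}^1$, and composing with the degree-at-most-$d_0$ (in fact bounded-degree in terms of the map $\lambda$) relation between $\bo c_0$ and $\lambda(\bo c_0)$, a standard functoriality/height-machine argument gives $h(x_j(\bo c_0)) \leq \gamma\,(h(\lambda(\bo c_0)) + 1)$ with $\gamma$ depending only on $\cC$. Combining, $\widehat h(P_j(\bo c_0)) \leq \tfrac12 \gamma(\gamma_1+1) + c \leq \gamma_2$.

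The main obstacle I anticipate is making the dependence of $h(x_j(\bo c_0))$ on $h(\lambda(\bo c_0))$ genuinely uniform, i.e.\ with a constant independent of $\bo c_0$: one must use that the height machine attaches to the fixed morphism $x_j\colon\cC\to\mathbb P^1$ an $O_{\cC}(1)$ term, not something growing with the point, and handle the fact that $\bo c_0$ is recovered from $\lambda(\bo c_0)$ only up to bounded ambiguity. This is routine but is the step where one has to be careful; everything else is bookkeeping with field degrees and the elementary comparison of heights on a single elliptic curve. Once $\widehat h(P_j(\bo c_0)) \leq \gamma_2$ and $[K:\Q]\leq \gamma_3 d_0$ are in hand, the lemma is proved, and these are exactly the inputs needed to feed into Lemma \ref{lemgenerators} in the sequel.
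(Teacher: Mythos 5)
Your proof is correct and follows essentially the same route as the paper's. Both the degree bound (from $f_j(x_j,\lambda)=0$ giving bounded-degree extensions of $k(\lambda(\bo c_0))$, then taking the compositum) and the height bound (from $h(x_j(\bo c_0))\ll h(\lambda(\bo c_0))+1$ via the polynomial relation, combined with a Zimmer-type comparison between $\widehat h$ and the naive height whose constant is controlled by $h(\lambda(\bo c_0))$ and hence by \eqref{boundedheight}) match the paper's argument; the uniformity you worry about at the end is in fact immediate from the fixed polynomial relation $f_j(x_j,\lambda)=0$ and elementary height bounds for roots of polynomials with bounded coefficient height, so there is no real obstacle there.
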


\begin{proof}

Recall that each $x_j(\bo{c}_0)$ is a root of $f_j(X,\lambda(\bo{c}_0))$. This already implies the second statement. Now, we have $h(P_j(\bo{c}_0)) \leq \gamma_4(h(\lambda(\bo{c}_0))+1)$ and, using the work of Zimmer \cite{Zimmer}, we have $\widehat{h}(P_j(\bo{c}_0))\leq h(P_j(\bo{c}_0)) +\gamma_5(h(\lambda(\bo{c}_0))+1)$. The first claim now follows from (\ref{boundedheight}).
\end{proof}

\section{Proof of Theorem \ref{mainthm}}

We want to show that there are at most finitely many $\bo{c}$ on the curve such that $P_1(\bo{c}),\dots , P_n(\bo{c})$ satisfy two linear independent 
relations on $E_{\lambda(\bo{c})}$. By Northcott's Theorem \cite{Northcott1949} and (\ref{boundedheight}), we only need to bound the degree $d$ of $\lambda(\bo{c})$ over 
$k$. 

Let $\bo{c}_0 \in \cC'$, $\lambda_0=\lambda(\bo{c}_0)$ and $d_0=[k(\lambda(\bo{c}_0)):k]$. First, by Lemma \ref{lemconj}, we can choose $\delta$, independent of $\bo{c}_0 $, such that $\lambda_0$ has at least 
$\frac{1}{2}d_0$ conjugates in $\Lambda_{\delta}$.
Now, since $\Lambda_{\delta}$ is compact, it can be covered by $\gamma_2$ closed discs $D_{\bo{c}_1}, \dots ,D_{\bo{c}_{\gamma_2}} \subseteq \lambda(\widehat{\cC})$, where $D_{\bo{c}_i}$ is centered in $\lambda(\bo{c}_i)$, for some $\bo{c}_i \in \widehat{\cC}$.

We can suppose that the closed disc $D_{\bo{c}_1}$ contains at least $\frac{1}{2\gamma_2}d_0$ conjugates $\lambda_0^\sigma$. 
Now, each such conjugate comes from a $\bo{c}_0^\sigma \in N_{\bo{c}_1}$ and the corresponding points $P_1(\bo{c}_0^\sigma), \dots ,P_n(\bo{c}_0^\sigma)$ 
satisfy the same linear relations. So there are linearly independent $(a_{1},\dots , a_{n}),(b_{1},\dots , b_{n})$ such that
\begin{align}\label{eq}
a_1P_1(\bo{c}_0^\sigma)+\dots +a_nP_n(\bo{c}_0^\sigma)=
b_1P_1(\bo{c}_0^\sigma)+\dots +b_nP_n(\bo{c}_0^\sigma)= O,
\end{align}
on $E_\lambda (\bo{c}_0^\sigma)$.

By Lemma \ref{lemboundheightdeg}, $\widehat{h}(P_j(\bo{c}_0^\sigma))\leq \gamma_3$ and the points are defined over some finite extension of $k(\lambda(\bo{c}_0^{\sigma}))$ of degree at most $\gamma_4 d_0$. Therefore, applying Lemma \ref{lemgenerators} and recalling \eqref{boundedheight}, we can suppose that the $a_j$ and $b_j$ are in absolute value less than or equal to $\gamma_{5} d_0^{\gamma_{6}}$.

Now, recall that, in Section \ref{periods}, on $\lambda(N_{\bo{c}_1})\supseteq D_{\bo{c}_1}$ we defined $f^{\bo{c}_1},g^{\bo{c}_1}$ to be generators of the period lattice $L_{\lambda}$ and the elliptic logarithms $z_1^{\bo{c}_1}, \dots ,z_n^{\bo{c}_1}$ such that
\begin{equation}\label{elllog}
\exp_\lambda (z_j^{\bo{c}_1}(\lambda))=P_j(\lambda),
\end{equation}
on $\lambda(N_{\bo{c}_1})$. We know that $z_1^{\bo{c}_1}, \dots , z_n^{\bo{c}_1}, f^{\bo{c}_1}, g^{\bo{c}_1}$ are holomorphic functions on a neighborhood of $D_{\bo{c}_1}$, with $f^{\bo{c}_1}(\lambda)$ and $g^{\bo{c}_1}(\lambda)$ linearly independent over $\R$ for every $\lambda\in D_{\bo{c}_1}$, and, by Lemma \ref{lemtransc}, $z^{\bo{c}_1}_{1}, \dots ,z^{\bo{c}_1}_{n}$ are algebraically independent over $\C(f^{\bo{c}_1},g^{\bo{c}_1})$ on $D_{\bo{c}_1}$. Therefore, the hypotheses of Proposition \ref{mainprop2} are satisfied.

By \eqref{eq} and \eqref{elllog}, we have that
\begin{align*}
a_1z_1^{\bo{c}_1}(\lambda_0^\sigma)+\dots +a_nz_n^{\bo{c}_1}(\lambda_0^\sigma)\equiv b_1z_1^{\bo{c}_1}(\lambda_0^\sigma)+\dots +b_nz_n^{\bo{c}_1}(\lambda_0^\sigma)\equiv 0 \mod L_{\lambda_0^\sigma}.
\end{align*}
Therefore, there are $a_{n+1},a_{n+2},b_{n+1},b_{n+2} \in \Z$ such that 
\begin{equation*}
\lg
\begin{array}{c}
a_{1}z_{1}^{\bo{c}_1}( \lambda_0^\sigma )+ \dots + a_{n} z_{n}^{\bo{c}_1}(\lambda_0^\sigma)=a_{n+1} f^{\bo{c}_1}(\lambda_0^\sigma)+ a_{n+2}g^{\bo{c}_1}(\lambda_0^\sigma)\\
b_{1}z_{1}^{\bo{c}_1}(\lambda_0^\sigma)+\dots + b_{n} z_{n}^{\bo{c}_1}(\lambda_0^\sigma)=b_{n+1}f^{\bo{c}_1}(\lambda_0^\sigma) + b_{n+2}g^{\bo{c}_1}(\lambda_0^\sigma).
\end{array}
\right.
\end{equation*} 

Thus all $\lambda_0^\sigma\in D_{\bo{c}_1}$ are in $ D_{\bo{c}_1}(\gamma_{5} d_0^{\gamma_{6}})$ (recall the definition of $D(T)$ right above Proposition \ref{mainprop2}).

By Proposition \ref{mainprop2}, we have that $|D_{\bo{c}_1}(\gamma_{5} d_0^{\gamma_{6}})|\ll_\epsilon d_0^{\gamma_6 \epsilon}$.
But by our choice of $D_{\bo{c}_1}$ we had at least $\frac{1}{2\gamma_2}d_0$ points in $D_{\bo{c}_1}(\gamma_{5} d_0^{\gamma_{6}})$. Therefore, if we choose $\epsilon < \frac{1}{\gamma_6}$ we have a contradiction if $d_0$ is large enough.

We just deduced that $d_0$ is bounded and, by (\ref{boundedheight}) and Northcott's Theorem, we have finiteness of the possible values of $\lambda(\bo{c}_0)$, which proves Theorem \ref{mainthm}.

\section*{Acknowledgments}

The authors would like to thank Umberto Zannier for his support, Daniel Bertrand, Philipp Habegger, Lars K\"uhne, Vincenzo Mantova and Harry Schmidt for useful discussions and the referee for very helpful comments that improved this article.

\bibliographystyle{amsalpha}
\bibliography{bibliography.bib}

\end{document}